\documentclass[final,twoside,11pt]{entics} 
\usepackage{enticsmacro}
\usepackage{graphicx}
\usepackage[all]{xy}
\sloppy

\newtheorem{defi}[thm]{Definition}
\newtheorem{ex}[thm]{Example}


\newcommand{\cl}{\operatorname{cl}}             
\newcommand{\Int}{\operatorname{int}}           



\newcommand{\fin}{\operatorname{fin}}

\newcommand{\sm}{\setminus}
\makeatletter
\newcommand{\rmnum}[1]{\romannumeral #1}
\newcommand{\Rmnum}[1]{\expandafter\@slowromancap\romannumeral #1@}
\makeatother

\newcommand\twoheaduparrow{\mathord{\rotatebox[origin=c]{90}{$\twoheadrightarrow$}}}
\newcommand\twoheaddownarrow{\mathord{\rotatebox[origin=c]{90}{$\twoheadleftarrow$}}}
\newcommand{\dda}{\twoheaddownarrow}
\newcommand{\dua}{\twoheaduparrow}
\newcommand{\ua}{\uparrow\nobreak\!}
\newcommand{\da}{\downarrow\nobreak\!}


\volume{2}			



\begin{document}
\begin{frontmatter}
  \title{Quasiexact Posets and the Moderate Meet-continuity} 
  \author{Zhaorong He\thanksref{a}\thanksref{ALL}\thanksref{myemail}}	
   \author{Zhongqiang Yang\thanksref{b}\thanksref{coemail}}
   \author{Dongsheng Zhao\thanksref{c}\thanksref{coemail-1}}		
   \address[a]{Department of Mathematics\\ Shantou University\\				
    Shantou, China 515063}  							
  \thanks[ALL]{The first two authors were supported by National Natural Science Foundation of China (No. 11971287). This work was completed while the first author visiting Nanyang Technological University under the Chinese Government Scholarship (No. 201908440290). }   
   \thanks[myemail]{Email: \href{mailto:17zrhe@stu.edu.cn} {\texttt{\normalshape
        17zrhe@stu.edu.cn}}} 
  \address[b]{School of Mathematics and Statistics\\Minnan Normal University\\
    Zhangzhou, China 363000} 
  \thanks[coemail]{Email:  \href{mailto:zqyang@stu.edu.cn} {\texttt{\normalshape
        zqyang@stu.edu.cn}}}
  \address[c]{Department of Mathematics and Mathematics Education, National Institute of Education\\Nanyang Technological University\\
    1 Nanyang Walk, Singapore 637616}
   \thanks[coemail-1]{Email:  \href{mailto:dongsheng.zhao@nie.edu.sg} {\texttt{\normalshape
        dongsheng.zhao@nie.edu.sg}}}    
    
\begin{abstract} 
The study of weak domains and quasicontinuous domains leads to the consideration of two types  generalizations of domains. In the current paper, we define the weak way-below relation between two nonempty subsets of a poset and  quasiexact posets. We prove some connections among quasiexact posets, quasicontinuous domains and weak domains. Furthermore, we introduce the weak way-below finitely determined topology and study its links to Scott topology and the weak way-below topology first considered by Mushburn. It is also proved that a dcpo is a domain if it is quasiexact and moderately meet continuous with the weak way-below relation weakly increasing.
\end{abstract}

\begin{keyword}
Quasiexact dcpo, Quasicontinuous domain, Weak domain, wf topology, Moderate meet-continuity
\end{keyword}
\end{frontmatter}

\section{Introduction}\label{intro}
Scott \cite{Scott-1970} proposed a model for information systems using the Scott topology  and a binary  relation $\prec$ in connection with the information models. For continuous lattices, the relation $\prec$ coincides with the way-below relation. The class of continuous complete lattices was introduced by Scott \cite{Scott-1974}. However, for general complete lattices, the aforementioned two relations may  be distinct.

One of the notable features of continuous lattices is that they admit a unique compact Hausdorff topology for which the meet operation is continuous. This topology, referred to as the CL-topology \cite{Gierz-Lawson-1981}, turns out to be `order intrinsic' - it  can be defined merely  using  the lattice structure. Gierz and Lawson \cite{Gierz-Lawson-1981} characterized those complete lattices for which the CL-topology is Hausdorff and called them generalized continuous lattices. Gierz et al. \cite{Gierz-Lawson-Stralka-1983b} introduced the quasicontinuous posets and showed that a complete lattice is  generalized continuous if and only if it is quasicontinuous. The key result for establishing the major properties of quasicontinuous dcpos is the Rudin's Lemma \cite{Rudin-1981}.

Coecke and Martin \cite{Coecke-Martin-2002} introduced two orders: the Bayesian order on classical states and the spectral order on quantum states. They revealed that the corresponding  sets are dcpos with an intrinsic notion of approximation. The operational significance of the orders involved conclusively establishes that physical information has a natural domain-like theoretic structure. Mushburn \cite{Mushburn-2007} called the approximation in  \cite{Coecke-Martin-2002} the weak way-below relation, and defined two topologies on posets: the way-below topology and the weak way-below topology. These topologies coincide with  the Scott topology for continuous posets,  but are very different for non-continuous posets. Mushburn also showed that while domain representable spaces must be Baire,  this is not the case with respect to the new topologies. Thus, Mushburn defined the  weak domains  and weak domain representable spaces and constructed an example to show that weak domain representable spaces need not be Baire \cite{Mushburn-2010}.

The class of meet continuous lattices was first introduced by Birkhoff \cite{Birkhoff-1967b}. Later, much investigations on meet continuity for lattices and semilattices sprang up. One can refer to Isbell \cite{Isbell-1975b}, Hofmann and Stralka \cite{Hofmann-Stralka-1976} and \cite{Gierz-book-2003}.  Kou et al. \cite{Kou-Liu-Luo-2001} extended the notion of meet continuity to general dcpos and proved that a dcpo is continuous iff it is  meet continuous and quasicontinuous. See \cite{Mao-Xu-2006} and \cite{Mao-Xu-2009} for the investigation of the more general meet-continuous posets and quasicontinuous posets.

As a generalization of the way-below relation between two subsets of posets and the weak way-below relation between elements, we define the weak way-below relation between two subsets of a poset and use this relation to define the quasiexact posets. We show some connections among quasiexact posets, quasicontinuous domains and weak domains. Furthermore, we introduce the weak way-below finitely determined topology (briefly, wf topology) and study its properties as well as its links to Scott topology and weak way-below topology in some special posets. In addition, we prove that a poset is a weak domain if it is moderately meet continuous and quasiexact dcpo with the relation $\ll_{w}$ weakly increasing, a result similar to the characterization of domains in terms of meet continuity and quasicontinuity. Finally, employing a result by Shen et al. \cite{Shen-Wu-Zhao-2019}, it is deduced that a dcpo is a domain if and only if it is quasiexact and moderately meet continuous with the weak way-below relation weakly increasing.

\section{Preliminaries}

In this section we recall some  notations, definitions and results to be used later.

For any subset $A$ of a poset $P$, we write $\ua A=\{x\in P: y\leq x~\mbox{for some}~y\in A\}$. A subset $A\subseteq P$ is called an \emph{upper set} if $\ua A=A$.

A nonempty subset $D$ of a poset $(P,\leq)$ is \emph{directed} if every two
elements in $D$ have an upper bound in $D$. A poset $P$ is called a \emph{directed complete} poset (\emph{dcpo}, for short) if every directed subset $D$ of $P$ has a supremum in $P$, denoted by $\bigvee D$.

For any poset $P$, the \emph{way-below} relation $\ll $ on  $P$ is defined as follows: for every directed subset $D$ with $\bigvee D$ existing, $y\leq \bigvee D$ implies  $x\leq d$ for some $d\in D$. A poset $P$ is \emph{continuous} if for every $x\in P$, the set $\dda x=\{y\in P: y\ll x\}$ is directed and $x=\bigvee\dda x$. For any $x\in P$, one writs  $\dua x=\{y\in P: x\ll y\}$.

 A subset $U$ of a poset $P$ is \emph{Scott open} if $U$ is an upper set and for any directed subset $D$ of $P$ for which $\bigvee D$ exists, $\bigvee D\in U$ implies $D\cap U\neq\varnothing$.
All Scott open subsets of $P$ form a topology,
called the \emph{Scott topology} on $P$ and
denoted by $\sigma(P)$. The space $(P,\sigma(P))$ is called the
\emph{Scott space} of $P$, and is denoted by $\Sigma P$.

A poset $P$ is said to be \emph{meet continuous}\footnote{In general, the meet continuity means for dcpos. In the current paper, we slightly misuse this notion.} if for any $x\in P$ and any directed subset $D$, $x\leq \bigvee D$ implies $x\in \cl_{\sigma}(\da D\cap\da x)$.

For any topological space $(X, \tau)$ and a subset $A\subseteq X$, the closure and the interior of $A$ are denoted by $\cl_{\tau}(A)$ and $\Int_{\tau}(A)$, respectively.

Let $(X, \tau)$ be a topological space. A nonempty subset $A$ of $X$  is called \emph{irreducible} if for any closed sets  $B$ and $C$, $A\subseteq B\cup C$  implies $A\subseteq B$ or $A\subseteq C$. The space $X$ is  \emph{sober} if for every irreducible closed set $A$, there exists a unique $x\in X$ such that $\cl_{\tau}\{x\}= A$.

The weak way-below relation on a poset $P$ is defined as follows \cite{Mushburn-2007}: for any $x, y\in P$,  $x$ is called \emph{weakly way below} $y$, denoted by $x\ll_{w} y$ if for any directed subset $D$ of $P$, $\bigvee D=y$ implies $D\cap \ua x\neq \varnothing$.

Note that for continuous posets, the relations $\ll_{w}$ and $\ll$ are the same. For any $x\in P$, we write $\dda_{w} x=\{y\in P: y\ll_{w} x\}$. Mushburn \cite{Mushburn-2007} pointed out the following properties.

\begin{lem} \emph{\cite[Theorem 3.1]{Mushburn-2007}} \label{Lemma-Mushburn}
For any elements  $x, y, z$ in a poset $P$,
\begin{enumerate}
\renewcommand{\labelenumi}{(\theenumi)}
\item If $x\ll y$, then $x\ll_{w} y$;
\item If $x\ll_{w} y$, then $x\leq y$;
\item If $x\leq y\ll_{w}z$, then $x\ll_{w}z$;
\item If $P$ has the bottom element $\bot$, then $\bot\ll_{w}x$.
\end{enumerate}
\end{lem}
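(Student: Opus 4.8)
The plan is to verify each of the four statements directly from the definitions of $\ll$ and $\ll_{w}$; each is a short argument, so the proof reduces to choosing the right directed set or invoking a single monotonicity step. The one point worth keeping in mind throughout is the precise difference between the two relations: $x\ll y$ tests all directed $D$ with $y\leq\bigvee D$, whereas $x\ll_{w}y$ tests only those directed $D$ with $\bigvee D=y$ exactly. Thus $\ll_{w}$ is obtained by restricting the hypothesis imposed on $D$, which is exactly why one expects it to be the \emph{weaker} relation.

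For (1), I would suppose $x\ll y$ and let $D$ be any directed set with $\bigvee D=y$. Then in particular $y\leq\bigvee D$, so the defining condition for $x\ll y$ yields some $d\in D$ with $x\leq d$, i.e.\ $d\in\ua x$ and $D\cap\ua x\neq\varnothing$; this is precisely $x\ll_{w}y$. For (2), I would apply the definition of $\ll_{w}$ to the singleton directed set $D=\{y\}$, whose supremum is $y$: then $D\cap\ua x\neq\varnothing$ forces $y\in\ua x$, that is $x\leq y$.

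For (3), assume $x\leq y\ll_{w}z$ and take any directed $D$ with $\bigvee D=z$. From $y\ll_{w}z$ we get $d\in D$ with $y\leq d$, and then $x\leq y\leq d$ gives $d\in\ua x$, so $D\cap\ua x\neq\varnothing$ and hence $x\ll_{w}z$. For (4), I would use that every directed set is nonempty: given directed $D$ with $\bigvee D=x$, pick any $d\in D$; since $\bot$ is the least element, $\bot\leq d$, so $d\in\ua\bot$ and $D\cap\ua\bot\neq\varnothing$, whence $\bot\ll_{w}x$. There is no genuine obstacle here — the only care required is to keep the inequality directions straight and to remember that directed sets are nonempty (used in (4)) and that singletons are directed (used in (2)).
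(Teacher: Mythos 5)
Your proof is correct: all four parts follow by exactly the direct verifications you give, and the only facts needed beyond the definitions are that directed sets are nonempty and that singletons are directed, both of which you invoke at the right moments. The paper itself states this lemma as a citation of Mushburn's Theorem 3.1 and gives no proof, so there is nothing to compare against; your argument is the standard one and fills that gap correctly.
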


It is well-known that the way-below relation $\ll$ is transitive: $x\ll y\leq z$ implies $x\ll z$. Whereas, this property may not be true for $\ll_{w}$. In fact, Coecke and Martin \cite{Coecke-Martin-2002} showed that the transitivity is true for $\ll_{w}$ if and only if $\ll_{w}=\ll$.

 A poset $P$ is called \emph{exact} if for any $x\in P$, the set $\dda_{w}x$ is directed and $\bigvee \dda_{w}x=x$ \cite{Mushburn-2007}. The relation $\leq_{w}$ on a poset $P$ is said to be \emph{weakly increasing} if for any $x, y, z, u\in P$, $x\ll_{w}y\leq z\ll_{w}u$ implies $x\ll_{w}z$. A poset $P$ is called a \emph{weak domain} if it is an exact dcpo with the relation $\ll_{w}$ weakly increasing.

Shen et al. \cite{Shen-Wu-Zhao-2019} proved the following characterization of exact dcpos.

\begin{prop} \label{equivalent condition of exact} \emph{\cite{Shen-Wu-Zhao-2019}}
A dcpo $P$ is exact if and only if for any $x\in P$, there exists a directed subset $D\subseteq\dda_{w}x$ such that $\bigvee D=x$.
\end{prop}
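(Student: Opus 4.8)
The plan is to treat the two implications separately, the forward direction being immediate and the converse carrying all the content. For the ``only if'' direction, suppose $P$ is exact. Then by definition $\dda_{w}x$ is itself directed and $\bigvee\dda_{w}x=x$, so taking $D=\dda_{w}x$ already exhibits the required directed subset of $\dda_{w}x$ with supremum $x$. Nothing more is needed here.

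For the ``if'' direction, fix $x\in P$ together with a directed set $D\subseteq\dda_{w}x$ satisfying $\bigvee D=x$; the task is to upgrade this hypothesis to the full statement that $\dda_{w}x$ is itself directed with supremum $x$. I would first settle the supremum. By Lemma \ref{Lemma-Mushburn}(2) every element of $\dda_{w}x$ lies below $x$, so $x$ is an upper bound of $\dda_{w}x$; and since $D\subseteq\dda_{w}x$, any upper bound of $\dda_{w}x$ is in particular an upper bound of $D$ and hence dominates $\bigvee D=x$. Thus $x$ is the least upper bound of $\dda_{w}x$, i.e.\ $x=\bigvee\dda_{w}x$. Note also that $\dda_{w}x$ is nonempty, since $D$ is.

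The substantive step is directedness of $\dda_{w}x$, and the idea is to use the fixed set $D$ as a bridge. Given $a,b\in\dda_{w}x$, I would apply the definition of $\ll_{w}$ to the particular directed set $D$, which has $\bigvee D=x$: from $a\ll_{w}x$ one gets $d_{1}\in D$ with $a\leq d_{1}$, and from $b\ll_{w}x$ one gets $d_{2}\in D$ with $b\leq d_{2}$. Directedness of $D$ then yields $d\in D$ with $d_{1},d_{2}\leq d$, whence $a\leq d$ and $b\leq d$; and since $d\in D\subseteq\dda_{w}x$ we have $d\in\dda_{w}x$. So $d$ is a common upper bound of $a$ and $b$ lying inside $\dda_{w}x$, proving that $\dda_{w}x$ is directed and therefore that $P$ is exact. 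The only point requiring care --- and the closest thing to an obstacle --- is to notice that the definition of $a\ll_{w}x$ quantifies over \emph{all} directed sets with supremum $x$, so it may be instantiated at the single distinguished set $D$; once this is observed, the common-upper-bound construction is essentially forced.
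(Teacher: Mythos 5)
Your proof is correct and complete: the supremum argument (using Lemma \ref{Lemma-Mushburn}(2) for the upper bound and $D\subseteq\dda_{w}x$ for leastness) and the directedness argument (instantiating the definition of $\ll_{w}$ at the distinguished directed set $D$, then pushing the two witnesses up inside $D$) are exactly what is needed, and nothing is missing. Note that the paper itself offers no proof of this proposition --- it is quoted from Shen, Wu and Zhao \cite{Shen-Wu-Zhao-2019} --- so there is no internal argument to compare against; your proof is the natural one, and it is worth observing that it is genuinely simpler than the paper's proof of the quasiexact analogue (Proposition \ref{equivalent condition 1 of quasiexact}), which must invoke Rudin's Lemma because finite sets, unlike single points, do not transfer directedness so directly.
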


For a poset $P$, the collection of all nonempty subsets of $P$ is denoted by $\mathcal{P}^{*}(P)$. A preorder $\leq$ on $\mathcal{P}^{*}(P)$ is defined by $G\leq H$ if $\ua H\subseteq \ua G$. This preorder is sometimes called the Smyth order, see \cite{Smyth-1978}.

A nonempty family $\mathcal{F}$ of subsets of a poset $P$ is said to be \emph{directed} if given $F_{1}$, $F_{2}\in\mathcal{F}$, there exists $F_{3}\in\mathcal{F}$ such that $F_{1},~F_{2}\leq F_{3}$, that is  $F_{3}\subseteq \ua F_{1}\cap\ua F_{2}$, or equivalently, $\ua F_{3}\subseteq\ua F_{1}\cap \ua F_{2}$. For any $G, H\subseteq P$, $G$ is \emph{way below} $H$,  written as $G\ll H$,  if for every directed subset $D\subseteq P$, $\sup D\in \ua H$ implies $d\in \ua G$ for some $d\in D$. Sometimes, one writes $G\ll x$ instead of $G\ll \{x\}$ and $y\ll H$ instead of $\{y\}\ll H$. For more details one can refer to \cite{Gierz-book-2003}.

Recall that a dcpo $P$ is called a \emph{quasicontinuous domain} if for each $x\in P$ the family $$\fin(x)=\{F\subseteq P: F~~\mbox{is finite,~~}F\ll x\}$$ is directed and whenever $x\nleq y$, then there exists $F\in\fin(x)$ with $y\notin\ua F.$ This statement is equivalent to for each $x\in P$ the family $$\fin(x)=\{F\subseteq P: F~~\mbox{is finite,~~}F\ll x\}$$ is directed and $\bigcap\{\ua F: F\in\fin(x)\}=\ua x.$

For more about posets and related notions and results, we refer to \cite{Gierz-book-2003} and \cite{Goubault-book-2013}.

\section{Quasiexact dcpos}

We first introduce the following definition, as the generalization of both the way-below relation between two nonempty subsets and the weak way-below relation between two points.

\begin{defi}
For any poset $P$ and $G, H\in \mathcal{P}^{*}(P)$, we say that $G$ is \emph{weakly way below} $H$ and write $G\ll_{w} H$, if for every directed subset $D\subseteq P$, $\bigvee D\in H$ implies  $d\in \ua G$ for some $d\in D$.
\end{defi}

The following properties can be verified straightforwardly.

\begin{lem}\label{Lemma-3.2}
For any poset $P$, let $G, G', H, H'\in \mathcal{P}^{*}(P)$ and $x\in P$. Then
\begin{enumerate}
\renewcommand{\labelenumi}{(\theenumi)}
  \item $G\ll_{w} H$ if and only if $G\ll_{w}x$ for all $x\in H$;
  \item $G\ll_{w}H$ if and only if $\ua G\ll_{w}H$;
  \item $G\ll_{w}H$ and $G\subseteq G'$ imply $G'\ll_{w}H$;
  \item $G\ll_{w}H$ and $H'\subseteq H$ imply $G\ll_{w}H'$.
\end{enumerate}
\end{lem}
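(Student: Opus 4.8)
The plan is to verify all four items by directly unwinding the definition of $\ll_w$, leaning on two elementary facts about the upper-set operator $\ua$: it is monotone (if $G\subseteq G'$ then $\ua G\subseteq\ua G'$) and idempotent ($\ua(\ua G)=\ua G$, since $\ua G$ is already an upper set). The guiding remark is that the assertion $G\ll_w H$ refers to $G$ only through the conclusion ``$d\in\ua G$'' and to $H$ only through the hypothesis ``$\bigvee D\in H$''; once this is noted, each item becomes a short matter of bookkeeping.

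For (1), the forward direction specializes the test: given $x\in H$ and a directed $D$ with $\bigvee D=x$, we have $\bigvee D\in H$, so $G\ll_w H$ supplies some $d\in D$ with $d\in\ua G$, which is exactly $G\ll_w x$. Conversely, if $G\ll_w x$ holds for every $x\in H$, then for any directed $D$ with $\bigvee D\in H$ we set $x=\bigvee D$ and apply $G\ll_w x$ to obtain the witness. For (4), I would simply observe that $H'\subseteq H$ turns the hypothesis $\bigvee D\in H'$ into the stronger $\bigvee D\in H$, so the witness $d$ guaranteed by $G\ll_w H$ is still produced; hence $G\ll_w H'$.

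Items (2) and (3) concern the operator $\ua$ on the left-hand argument. For (3), monotonicity gives $\ua G\subseteq\ua G'$ whenever $G\subseteq G'$, so any $d\in\ua G$ furnished by $G\ll_w H$ automatically lies in $\ua G'$, yielding $G'\ll_w H$. For (2), idempotency is decisive: the condition $d\in\ua(\ua G)$ that defines $\ua G\ll_w H$ is literally the condition $d\in\ua G$ that defines $G\ll_w H$, so the two relations have identical unfolded meaning and are therefore equivalent. I anticipate no genuine obstacle here, since the lemma is purely a definitional exercise; the only points deserving a moment's care are the passage in (1) between testing all singletons $\{x\}$ with $x\in H$ and testing $H$ itself, and the invocation of idempotency of $\ua$ in (2).
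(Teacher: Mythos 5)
Your proof is correct: all four items follow exactly as you argue, by unwinding the definition of $\ll_w$ together with monotonicity and idempotency of $\ua$, and the small care points you flag (singleton tests in (1), $\ua(\ua G)=\ua G$ in (2)) are handled properly. The paper offers no proof at all---it states that these properties ``can be verified straightforwardly''---so your direct verification is precisely the intended argument.
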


Thereafter, we write $G\ll_{w} x$ instead of $G\ll_{w} \{x\}$ and $y\ll_{w} H$ instead of $\{y\}\ll_{w} H$. By Lemma \ref{Lemma-3.2}, $y\ll_{w}x$ is unambiguously defined and this fact is similar with the one for the way-below relation.

For any poset $P$ and $x\in P$, write $$\fin_{w}(x)=\{F\subseteq P: F ~\mbox{is finite}, F\ll_{w}x\}.$$

\begin{defi}
A poset $P$ is said to be \emph{quasiexact} if for each $x\in P$, the family $\fin_{w}(x)$ is directed and $\bigcap\{\ua F: F\in \fin_{w}(x)\}=\ua x$.
\end{defi}

\begin{rem}\label{Remark-3.1}
For any poset $P$, it is easy to verify that $F\ll_{w} x$ implies  $x\in \ua F$ for any  $F\subseteq P$ and $x\in P$. Thus, $\bigcap\{\ua F: F\in \mathcal{F}\}\supseteq\ua x$  holds for all $x\in P$ and $\mathcal{F}\subseteq\fin_{w}(x)$. In particular, $\bigcap\{\ua F: F\in \fin_{w}(x)\}\supseteq\ua x$ for any $x\in P$.
\end{rem}

Sometimes it is difficult to characterize the weak way-below relation on a poset. To show a poset $P$ is quasiexact, it is sufficient to know for any $x\in P$, there are `enough' elements that are weakly way below $x$. In order to elaborate this fact, we first show the following lemmas.

The proofs of the following Lemmas \ref{equivalence of weak way-below} and \ref{F intersection down x is directed} are straightforward.

\begin{lem}\label{equivalence of weak way-below}
For any poset $P$ and $F\in \mathcal{P}^{*}(P), x\in P$,   $F\ll_{w}x~\mbox{if and only if}~\da x\cap F\ll_{w}x.$
\end{lem}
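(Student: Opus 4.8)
The plan is to prove the two implications of the biconditional separately, since they have quite different flavors: the implication $\da x\cap F\ll_{w}x \Rightarrow F\ll_{w}x$ is essentially free from monotonicity, whereas $F\ll_{w}x\Rightarrow \da x\cap F\ll_{w}x$ requires a short argument exploiting that every element of a directed set lies below its supremum.

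For the reverse implication I would simply invoke Lemma \ref{Lemma-3.2}(3). Since $\da x\cap F\subseteq F$, that lemma applied with $G=\da x\cap F$ and $G'=F$ yields $F\ll_{w}x$ directly from $\da x\cap F\ll_{w}x$. Before doing so, though, I must note that the statement $\da x\cap F\ll_{w}x$ presupposes $\da x\cap F\in\mathcal{P}^{*}(P)$, i.e. that $\da x\cap F$ is nonempty; this is guaranteed as soon as $F\ll_{w}x$ holds, by Remark \ref{Remark-3.1} (see below), so in the reverse direction nonemptiness is already part of the hypothesis.

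For the forward implication, assume $F\ll_{w}x$. First I would observe, via Remark \ref{Remark-3.1}, that $F\ll_{w}x$ forces $x\in\ua F$, so there is some $f\in F$ with $f\leq x$; hence $\da x\cap F\neq\varnothing$ and the relation $\da x\cap F\ll_{w}x$ is meaningful. To verify it, I would take an arbitrary directed subset $D\subseteq P$ with $\bigvee D=x$. By $F\ll_{w}x$ there exists $d\in D$ with $d\in\ua F$, so $f'\leq d$ for some $f'\in F$. The key step is that, since $d\in D$ and $\bigvee D=x$, we have $d\leq x$, whence $f'\leq d\leq x$ and therefore $f'\in\da x\cap F$. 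This shows $d\in\ua(\da x\cap F)$, which is exactly what is needed to conclude $\da x\cap F\ll_{w}x$.

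There is no serious obstacle here; the only point requiring a moment's care is ensuring $\da x\cap F$ is nonempty so that it is a legitimate member of $\mathcal{P}^{*}(P)$, which Remark \ref{Remark-3.1} supplies. The substance of the forward direction is the single inequality $d\leq\bigvee D=x$, which upgrades a witness $f'\leq d$ in $F$ to a witness in $\da x\cap F$.
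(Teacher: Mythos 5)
Your proof is correct: the paper omits the argument as ``straightforward,'' and what you give is exactly the natural direct verification it has in mind --- Lemma \ref{Lemma-3.2}(3) (or monotonicity of $\ll_w$ in the lower argument) for one direction, and for the other the key observation that any $d\in D$ with $\bigvee D=x$ satisfies $d\leq x$, so a witness $f'\leq d$ in $F$ automatically lies in $\da x\cap F$. Your care about nonemptiness of $\da x\cap F$ via Remark \ref{Remark-3.1} is a correct and welcome detail that the paper leaves implicit.
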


\begin{lem} \label{F intersection down x is directed}
For any poset $P$, let $\mathcal{F}\subseteq\mathcal{P}^{*}(P)$ and $x\in P$. If $\mathcal{F}$ is directed, then $\{F\cap \da x: F\in\mathcal{F}\}$ is directed provided that each $F\cap \da x$ is nonempty.
\end{lem}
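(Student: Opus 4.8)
The plan is to verify directedness of the family $\mathcal{G}=\{F\cap\da x: F\in\mathcal{F}\}$ directly from the definition of the Smyth order. Fix two members $F_{1}\cap\da x$ and $F_{2}\cap\da x$ of $\mathcal{G}$, with $F_{1}, F_{2}\in\mathcal{F}$. Since $\mathcal{F}$ is directed, there exists $F_{3}\in\mathcal{F}$ with $F_{3}\subseteq\ua F_{1}\cap\ua F_{2}$. The natural candidate for a common upper bound of $F_{1}\cap\da x$ and $F_{2}\cap\da x$ is $F_{3}\cap\da x$: it lies in $\mathcal{G}$ because $F_{3}\in\mathcal{F}$, and it is nonempty by the standing hypothesis that each $F\cap\da x$ is nonempty, so it is a legitimate member of the family.

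The substance of the argument is to show $F_{3}\cap\da x\subseteq\ua(F_{1}\cap\da x)$ and, symmetrically, $F_{3}\cap\da x\subseteq\ua(F_{2}\cap\da x)$. Take any $z\in F_{3}\cap\da x$. From $z\in F_{3}\subseteq\ua F_{1}$ we obtain some $y\in F_{1}$ with $y\leq z$. The decisive point is that $z\in\da x$ means $z\leq x$, so by transitivity $y\leq z\leq x$, whence $y\in\da x$ and therefore $y\in F_{1}\cap\da x$. This witnesses $z\in\ua(F_{1}\cap\da x)$, giving the claimed containment. Since $\ua(\cdot)$ is a closure operator (monotone and idempotent), $F_{3}\cap\da x\subseteq\ua(F_{1}\cap\da x)$ yields $\ua(F_{3}\cap\da x)\subseteq\ua(F_{1}\cap\da x)$, which is exactly $F_{1}\cap\da x\leq F_{3}\cap\da x$ in the Smyth order. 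Replacing $F_{1}$ by $F_{2}$ throughout gives $F_{2}\cap\da x\leq F_{3}\cap\da x$, so $F_{3}\cap\da x$ is a common upper bound and $\mathcal{G}$ is directed.

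There is essentially no obstacle here, which is why the statement is labelled straightforward; the one point deserving attention is the verification that intersecting with $\da x$ preserves the witnessing inequality. This works precisely because $\da x$ is a down-set: any $y\in F_{1}$ witnessing $z\in\ua F_{1}$ satisfies $y\leq z\leq x$ and hence automatically falls inside $F_{1}\cap\da x$. No property of $\mathcal{F}$ beyond its directedness is invoked, and the nonemptiness proviso is needed only to guarantee that the chosen upper bound $F_{3}\cap\da x$ is itself a member of $\mathcal{G}$.
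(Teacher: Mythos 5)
Your proof is correct: the paper itself omits the argument (it declares Lemmas \ref{equivalence of weak way-below} and \ref{F intersection down x is directed} straightforward), and your verification is exactly the intended one, with the key observation that $\da x$ being a down-set forces any witness $y\leq z\leq x$ of $z\in\ua F_{1}$ into $F_{1}\cap\da x$. The only detail left implicit is that the new family is nonempty, which is immediate since a directed $\mathcal{F}$ is by definition nonempty.
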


The following lemma given by Rudin \cite{Rudin-1981} is crucial in proving a number of major properties of  quasicontinuous domains, one can also refer to Lemma \uppercase\expandafter{\romannumeral 3}-3.3 in \cite{Gierz-book-2003}.

\begin{lem}\emph{(\textbf{Rudin's Lemma})}\label{Rudin's Lemma}
For any poset $P$, let $\mathcal{F}$ be a directed family of nonempty finite subsets of $P$. There exists a directed set $D\subseteq\bigcup_{F\in \mathcal{F}} F$ such that $D\cap F\neq\varnothing$ for every $F\in \mathcal{F}$.
\end{lem}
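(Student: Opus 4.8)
The plan is to produce the set $D$ by a minimization argument via Zorn's Lemma and then to verify directedness by hand, the latter being the real content. Write $M=\bigcup_{F\in\mathcal F}F$. First I would consider the collection $\mathbb{S}$ of all subsets $D\subseteq M$ that are \emph{downward closed inside $M$} (meaning $p\in D$, $q\in M$, $q\le p$ imply $q\in D$) and that meet every member of the family, i.e.\ $D\cap F\neq\varnothing$ for all $F\in\mathcal F$. Order $\mathbb{S}$ by reverse inclusion. Then $M$ itself lies in $\mathbb{S}$, so $\mathbb{S}\neq\varnothing$; and for a chain $\{D_i\}$ the intersection $\bigcap_i D_i$ is again downward closed in $M$, and it still meets every $F$: since $F$ is \emph{finite}, the sets $D_i\cap F$ form a chain of nonempty subsets of a finite set, of which only finitely many are distinct, so they have a nonempty intersection equal to $(\bigcap_i D_i)\cap F$. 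Hence every chain has an upper bound, and Zorn's Lemma yields a minimal element $D\in\mathbb{S}$.

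Next I would read off from minimality the key selection property. Fix $x\in D$ and consider $D\setminus\ua x$. This is still downward closed in $M$ (if $q\le p\in D\setminus\ua x$ with $q\in M$, then $q\in D$, and $q\ge x$ would force $p\ge x$), and it is a proper subset of $D$ because $x$ itself is removed. By minimality it cannot meet every member of $\mathcal F$, so there exists $G_x\in\mathcal F$ with $(D\setminus\ua x)\cap G_x=\varnothing$; since $D\cap G_x\neq\varnothing$, this says
$$\varnothing\neq D\cap G_x\subseteq\ua x,$$
i.e.\ every element of $G_x$ that happens to lie in $D$ is above $x$.

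Finally, to show $D$ is directed, take $x,y\in D$ and the associated $G_x,G_y$ from the previous step. Using that $\mathcal F$ is directed in the Smyth order, choose $G\in\mathcal F$ with $G\subseteq\ua G_x\cap\ua G_y$, and pick $w\in D\cap G$, which is nonempty because $D$ meets every member of $\mathcal F$. From $w\in G\subseteq\ua G_x$ there is $a\in G_x$ with $a\le w$; here is the crucial move: since $a\in M$ and $a\le w\in D$, downward closure of $D$ inside $M$ forces $a\in D$, and then $a\in D\cap G_x\subseteq\ua x$ gives $x\le a\le w$. Symmetrically $y\le w$, so $w\in D$ is a common upper bound of $x$ and $y$ in $D$. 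As $D\subseteq M=\bigcup_{F\in\mathcal F}F$ meets every $F$, it is the required directed set.

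The main obstacle is exactly this directedness step, and the subtle point I would emphasize is the choice of $\mathbb{S}$. A naive attempt---taking a minimal upper set of $P$ meeting every $F$, or merely a minimal subset of $M$ meeting every $F$---does \emph{not} work: such minimal sets can fail to be directed, and can even contain no directed set meeting all members of $\mathcal F$. What rescues the argument is insisting that $D$ be downward closed \emph{within} $M$; this is precisely what guarantees $a\in D$ above, upgrading an element $a$ that merely sits below $w$ into a genuine witness in $D$ lying above $x$. I would therefore present the downward-closure condition as the load-bearing feature of the construction rather than an incidental technical choice.
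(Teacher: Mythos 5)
The paper never proves this lemma: it is imported as Rudin's Lemma with a citation to Rudin's 1981 paper and to Lemma III-3.3 of Gierz et al., so there is no in-paper argument to compare against; the comparison can only be with the standard literature proof, and your argument is in fact that standard Zorn-type proof, carried out correctly. Every step checks: $M\in\mathbb{S}$ so $\mathbb{S}\neq\varnothing$; finiteness of each $F$ makes chain intersections stay in $\mathbb{S}$; the set $D\setminus\ua x$ inherits relative downward closure, so minimality yields $G_{x}\in\mathcal{F}$ with $\varnothing\neq D\cap G_{x}\subseteq\ua x$; and in the directedness step the downward closure of $D$ inside $M$ is exactly what legitimizes moving the witness $a\in G_{x}$ into $D$. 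Your cautionary remark at the end is also accurate, and it is the genuine content of the proof: plain minimality fails. To make that concrete, take $P=\{x,y,a,b,w\}$ with $a<w$, $b<w$ and no other strict comparabilities, and $\mathcal{F}=\{\{x,a\},\{y,b\},\{w\}\}$; this family is directed in the Smyth preorder, and $\{x,y,w\}$ is minimal among subsets of $\bigcup_{F\in\mathcal{F}}F$ meeting every member, yet it is not directed ($x$ and $y$ have no common upper bound in it) and it contains no directed subset meeting all three members of $\mathcal{F}$. So a Zorn argument on bare transversals cannot succeed, and the relative-lower-set condition (equivalently, working with minimal relatively closed sets for the Alexandrov topology on $\bigcup_{F\in\mathcal{F}}F$, which is how careful treatments in the literature phrase it) is, as you say, the load-bearing feature rather than a technical convenience.
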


\begin{lem} \label{Lemma according to Rudin's Lemma}
For any dcpo $P$, let $G\in \mathcal{P}^{*}(P)$ and $x\in P$ with $G\ll_{w}x$. If $\mathcal{F}$ is a directed family  of nonempty finite subsets $F\subseteq \da x$ with $\bigcap_{F\in\mathcal{F}}\ua F\subseteq\ua x$, then there exists $F_{0}\in \mathcal{F}$ such that $F_{0}\subseteq\ua G$.
\end{lem}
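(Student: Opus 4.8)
The plan is to argue by contradiction and to convert the conclusion into a situation where Rudin's Lemma (Lemma~\ref{Rudin's Lemma}) applies. So I would assume that no $F_{0}\in\mathcal{F}$ satisfies $F_{0}\subseteq\ua G$; equivalently, $F\setminus\ua G\neq\varnothing$ for every $F\in\mathcal{F}$. Since each $F$ is a finite subset of $\da x$, the same holds for each $F\setminus\ua G$, so I would consider the family $\mathcal{G}=\{F\setminus\ua G:F\in\mathcal{F}\}$ of nonempty finite subsets of $\da x$ and aim to extract from it a directed set $D$ that is disjoint from $\ua G$ yet has supremum $x$; this will contradict $G\ll_{w}x$.

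The first key step is to check that $\mathcal{G}$ is directed in the Smyth order. Given $F_{1},F_{2}\in\mathcal{F}$, directedness of $\mathcal{F}$ yields $F_{3}\in\mathcal{F}$ with $F_{3}\subseteq\ua F_{1}\cap\ua F_{2}$. For $z\in F_{3}\setminus\ua G$ pick $y\in F_{1}$ with $y\leq z$; were $y\in\ua G$, then $z\in\ua G$ because $\ua G$ is an upper set, contradicting $z\notin\ua G$. Hence $y\in F_{1}\setminus\ua G$, so $z\in\ua(F_{1}\setminus\ua G)$, and likewise $z\in\ua(F_{2}\setminus\ua G)$. Thus $F_{3}\setminus\ua G\subseteq\ua(F_{1}\setminus\ua G)\cap\ua(F_{2}\setminus\ua G)$, that is, $F_{3}\setminus\ua G$ is an upper bound of $F_{1}\setminus\ua G$ and $F_{2}\setminus\ua G$, so $\mathcal{G}$ is directed. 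Applying Rudin's Lemma to $\mathcal{G}$ produces a directed set $D\subseteq\bigcup_{F\in\mathcal{F}}(F\setminus\ua G)$ meeting every member of $\mathcal{G}$. By construction $D\cap\ua G=\varnothing$ and $D\subseteq\da x$, so $\bigvee D$ exists (as $P$ is a dcpo) and $\bigvee D\leq x$. Because $D\cap(F\setminus\ua G)\neq\varnothing$ for each $F\in\mathcal{F}$, some element of $F$ lies below $\bigvee D$, whence $\bigvee D\in\ua F$ for every $F$; therefore $\bigvee D\in\bigcap_{F\in\mathcal{F}}\ua F\subseteq\ua x$, and so $\bigvee D=x$.

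Finally, $G\ll_{w}x$ together with $\bigvee D=x$ forces $d\in\ua G$ for some $d\in D$, contradicting $D\cap\ua G=\varnothing$; this contradiction yields the desired $F_{0}$. I expect the main obstacle to be the directedness of $\mathcal{G}$: one must be sure that deleting the upper set $\ua G$ is compatible with the Smyth order, and it is precisely here that the fact that $\ua G$ is upward closed is used. Once $\mathcal{G}$ is known to be directed, the remainder is a routine combination of Rudin's Lemma, the dcpo property, and the definition of $\ll_{w}$.
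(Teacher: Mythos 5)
Your proof is correct and follows essentially the same route as the paper: argue by contradiction, show that $\{F\setminus\ua G: F\in\mathcal{F}\}$ is a directed family of nonempty finite sets (via the same key inclusion $\ua F_i\setminus\ua G\subseteq\ua(F_i\setminus\ua G)$, which you justify explicitly using upward-closedness of $\ua G$), apply Rudin's Lemma to extract a directed set $D$ disjoint from $\ua G$, deduce $\bigvee D = x$ from $D\subseteq\da x$ and $\bigvee D\in\bigcap_{F\in\mathcal{F}}\ua F\subseteq\ua x$, and contradict $G\ll_{w}x$.
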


\begin{proof}
Assume, on the contrary that $F\backslash\ua G\neq \varnothing$ for all $F\in \mathcal{F}$. For any $F_{1}, F_{2}\in\mathcal{F}$, choose $F_{3}\in \mathcal{F}$ such that $F_{3}\subseteq \ua F_{1}\cap \ua F_{2}$. Then $F_{3}\backslash \ua G\subseteq (\ua F_{1}\cap \ua F_{2})\backslash\ua G=(\ua F_{1}\backslash\ua G)\cap(\ua F_{2}\backslash\ua G)$. It is easy to verify that $\ua F_{1}\backslash\ua G\subseteq \ua (F_{1}\backslash \ua G)$ and $\ua F_{2}\backslash\ua G\subseteq \ua (F_{2}\backslash \ua G)$. Hence
$F_{3}\backslash \ua G\subseteq \ua(F_{1}\backslash\ua G)\cap\ua (F_{2}\backslash\ua G)$.
Thus, $\{F\backslash\ua G: F\in \mathcal{F}\}$ is a directed family of nonempty finite subsets. By Rudin's Lemma, there exists a directed set $D\subseteq\bigcup_{F\in\mathcal{F}}(F\backslash\ua G)$ such that $D\cap(F\backslash\ua G)\neq\varnothing$ for any $F\in \mathcal{F}$. For every $F\in \mathcal{F}$, choose $d_{F}\in D$ such that $d_{F}\in F\setminus\ua G$. Then $\bigvee D\in\bigcap_{F\in \mathcal{F}} \ua d_{F}\subseteq \bigcap_{F\in \mathcal{F}}\ua (F\setminus\ua G)\subseteq\bigcap_{F\in \mathcal{F}}\ua F\subseteq\ua x$. Thus, $\bigvee D\geq x$. Note that $D\subseteq\bigcup_{F\in\mathcal{ F}}F\subseteq \da x$, so $\bigvee D\leq x$. It follows that $\bigvee D=x$. By $G\ll_{w}x$, there exists $d\in D$ such that $d\in \ua G$, which contradicts $D\subseteq\bigcup_{F\in\mathcal{F}}(F\backslash\ua G)$.
\end{proof}

Similar to Proposition \ref{equivalent condition of exact} for exact posets, we have the following result.

\begin{prop}\label{equivalent condition 1 of quasiexact}
A dcpo $P$ is quasiexact if and only if for any $x\in P$, there exists a directed subset $\mathcal{F}\subseteq \fin_{w}(x)$ such that $\bigcap_{F\in \mathcal{F}}\ua F=\ua x$.
\end{prop}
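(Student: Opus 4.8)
The plan is to prove both implications, with the forward direction being essentially immediate and the converse resting entirely on Lemma~\ref{Lemma according to Rudin's Lemma}. If $P$ is quasiexact, then by definition $\fin_{w}(x)$ is itself directed and satisfies $\bigcap\{\ua F: F\in\fin_{w}(x)\}=\ua x$, so taking $\mathcal{F}=\fin_{w}(x)$ furnishes the required directed subfamily and the ``only if'' part is done.

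For the converse, suppose that for each $x\in P$ there is a directed $\mathcal{F}\subseteq\fin_{w}(x)$ with $\bigcap_{F\in\mathcal{F}}\ua F=\ua x$. First I would observe that the intersection half of quasiexactness comes for free: since $\mathcal{F}\subseteq\fin_{w}(x)$, we have $\bigcap\{\ua F: F\in\fin_{w}(x)\}\subseteq\bigcap_{F\in\mathcal{F}}\ua F=\ua x$, while Remark~\ref{Remark-3.1} supplies the reverse inclusion $\bigcap\{\ua F: F\in\fin_{w}(x)\}\supseteq\ua x$. Hence the whole difficulty is concentrated in showing that $\fin_{w}(x)$ is directed.

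To establish directedness I would first normalise the witnessing family so that its members sit below $x$. Each $F\in\mathcal{F}$ satisfies $x\in\ua F$ by Remark~\ref{Remark-3.1}, so $F\cap\da x\neq\varnothing$; hence by Lemma~\ref{F intersection down x is directed} the family $\mathcal{F}'=\{F\cap\da x: F\in\mathcal{F}\}$ is again directed, it consists of nonempty finite subsets of $\da x$, each of its members still lies weakly way below $x$ by Lemma~\ref{equivalence of weak way-below}, and $\bigcap_{F'\in\mathcal{F}'}\ua F'=\ua x$ (the inclusion $\subseteq$ follows from $F\cap\da x\subseteq F$, and $\supseteq$ from $x\in\ua(F\cap\da x)$). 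Now, given arbitrary $F_{1},F_{2}\in\fin_{w}(x)$, I would apply Lemma~\ref{Lemma according to Rudin's Lemma} twice to the directed family $\mathcal{F}'$: with $G=F_{1}$ it yields some $F_{1}'\in\mathcal{F}'$ with $F_{1}'\subseteq\ua F_{1}$, and with $G=F_{2}$ it yields some $F_{2}'\in\mathcal{F}'$ with $F_{2}'\subseteq\ua F_{2}$. Directedness of $\mathcal{F}'$ then produces $F_{3}'\in\mathcal{F}'$ with $F_{3}'\subseteq\ua F_{1}'\cap\ua F_{2}'\subseteq\ua F_{1}\cap\ua F_{2}$, the last inclusion following from $\ua F_{i}'\subseteq\ua\ua F_{i}=\ua F_{i}$. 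Since $F_{3}'\in\mathcal{F}'\subseteq\fin_{w}(x)$, this $F_{3}'$ is precisely a common upper bound of $F_{1}$ and $F_{2}$ in the Smyth order inside $\fin_{w}(x)$, so $\fin_{w}(x)$ is directed and $P$ is quasiexact.

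The step I expect to be the main obstacle is the passage from $\mathcal{F}$ to $\mathcal{F}'$ together with the verification that $\mathcal{F}'$ still satisfies all the hypotheses of Lemma~\ref{Lemma according to Rudin's Lemma}, especially that truncating each member at $\da x$ preserves directedness and keeps the intersection of the up-sets equal to $\ua x$. This is the crux because it is exactly the Rudin-type Lemma~\ref{Lemma according to Rudin's Lemma} that upgrades the hypothesis on a single convenient family into the pairwise control over arbitrary $F_{1},F_{2}\in\fin_{w}(x)$ needed for directedness.
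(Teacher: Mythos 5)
Your proof is correct and follows essentially the same route as the paper: both reduce to showing $\fin_{w}(x)$ is directed by truncating the witnessing family to $\{F\cap\da x: F\in\mathcal{F}\}$ (via Lemma~\ref{equivalence of weak way-below} and Lemma~\ref{F intersection down x is directed}) and then invoking the Rudin-type Lemma~\ref{Lemma according to Rudin's Lemma} to dominate an arbitrary pair from $\fin_{w}(x)$. The only cosmetic difference is that you apply that lemma with $G=F_{i}$ directly, while the paper applies it with $G=\da x\cap G_{i}$; both satisfy its hypothesis, so the arguments are interchangeable.
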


\begin{proof}
It is enough to prove the sufficiency. Let $x\in P$ and $\mathcal{F}\subseteq\fin_{w}(x)$ be a directed family with $\bigcap_{F\in\mathcal{F}}\ua F=\ua x$. Since  $\ua x=\bigcap_{F\in\mathcal{F}}\ua F\supseteq \bigcap_{G\in\fin_{w}(x)}\ua G \supseteq \ua x$, it follows that $\bigcap_{G\in\fin_{w}(x)}\ua G= \ua x$. It remains to show that $\fin_{w}(x)$ is directed.
Let $G_{1}, G_{2}\in\fin_{w}(x)$. By Lemma \ref{equivalence of weak way-below}, we have $\da x\cap G_{1}\ll_{w}x$ and $\da x\cap G_{2}\ll_{w}x$. By Lemma \ref{F intersection down x is directed}, $\{\da x\cap F: F\in \mathcal{F}\}$ is a directed collection of nonempty finite subsets of $\da x$. Furthermore, $\bigcap\{\ua(\da x\cap F): F\in \mathcal{F}\}\subseteq\bigcap\{\ua F: F\in\mathcal{F}\}=\ua x$. By Lemma \ref{Lemma according to Rudin's Lemma}, there exists $F_{1}, F_{2}\in \mathcal{F}$ such that $\da x\cap F_{1}\subseteq\ua(\da x\cap G_{1})$ and $\da x\cap F_{2}\subseteq\ua(\da x\cap G_{2})$. Choose $F_{3}\in \mathcal{F}$ such that $\da x\cap F_{3}\subseteq\ua(\da x\cap F_{1})\cap \ua(\da x\cap F_{2})$. Note that $\ua(\da x\cap F_{1})\cap \ua(\da x\cap F_{2})\subseteq\ua(\da x\cap G_{1})\cap\ua(\da x\cap G_{2})\subseteq \ua G_{1}\cap\ua G_{2}$. It follows that $\da x\cap F_{3}\subseteq \ua G_{1}\cap\ua G_{2}$. Applying Lemma \ref{equivalence of weak way-below} again, we can conclude $\da x\cap F_{3}\in\fin_{w}(x)$, whence $\fin_{w}(x)$ is directed. So $P$ is quasiexact.
\end{proof}

We write $\fin_{(bl)w}(x)=\{F: F\in \fin_{w}(x), F\subseteq\da x\}$. For any $F\in\fin_{w}(x)$, we have $\da x\cap F\ll_{w}x$ by Lemma \ref{equivalence of weak way-below}. It follows that $\da x\cap F\in\fin_{(bl)w}(x)$. Note that $F=\da x\cap F$ for any $F\in\fin_{(bl)w}(x)$, so $\fin_{(bl)w}(x)=\{\da x\cap F: F\in\fin_{(bl)w}(x)\}\subseteq\{\da x\cap F: F\in\fin_{w}(x)\}$. Therefore, $\fin_{(bl)w}(x)=\{\da x\cap F: F\in\fin_{w}(x)\}$.

Now, we give another characterization of quasiexact dcpos.

\begin{prop}\label{equivalent condition 2 of quasiexact}
A dcpo $P$ is quasiexact if and only if for any $x\in P$, the collection $\fin_{(bl)w}(x)$ is directed and $\bigcap_{F\in \fin_{(bl)w}(x)}\ua F=\ua x$.
\end{prop}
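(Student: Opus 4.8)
The plan is to derive both implications from Proposition~\ref{equivalent condition 1 of quasiexact} together with the identity $\fin_{(bl)w}(x)=\{\da x\cap F: F\in\fin_{w}(x)\}$ established just above the statement, controlling directedness and the relevant intersections via Lemma~\ref{F intersection down x is directed} and Remark~\ref{Remark-3.1}. The sufficiency is essentially immediate. Suppose that for every $x\in P$ the family $\fin_{(bl)w}(x)$ is directed and $\bigcap_{F\in\fin_{(bl)w}(x)}\ua F=\ua x$. Since $\fin_{(bl)w}(x)\subseteq\fin_{w}(x)$ by definition, the family $\mathcal{F}=\fin_{(bl)w}(x)$ is a directed subfamily of $\fin_{w}(x)$ whose intersection of up-sets equals $\ua x$. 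This is exactly the hypothesis of Proposition~\ref{equivalent condition 1 of quasiexact}, so $P$ is quasiexact.

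For the necessity, suppose $P$ is quasiexact, so $\fin_{w}(x)$ is directed and $\bigcap_{F\in\fin_{w}(x)}\ua F=\ua x$. First I would verify that each $\da x\cap F$ is nonempty: by Remark~\ref{Remark-3.1}, $F\ll_{w}x$ gives $x\in\ua F$, so some $y\in F$ satisfies $y\leq x$, i.e.\ $y\in\da x\cap F$. Hence Lemma~\ref{F intersection down x is directed} applies to the directed family $\fin_{w}(x)$ and shows that $\{\da x\cap F: F\in\fin_{w}(x)\}=\fin_{(bl)w}(x)$ is directed. It then remains to compute the intersection. The inclusion $\bigcap_{F\in\fin_{(bl)w}(x)}\ua F\supseteq\ua x$ is clear, either from $\fin_{(bl)w}(x)\subseteq\fin_{w}(x)$ (an intersection over a smaller family is larger) or directly from Remark~\ref{Remark-3.1}. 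For the reverse inclusion, note that for every $F\in\fin_{w}(x)$ one has $\da x\cap F\in\fin_{(bl)w}(x)$ and $\ua(\da x\cap F)\subseteq\ua F$; therefore $\bigcap_{F'\in\fin_{(bl)w}(x)}\ua F'\subseteq\ua F$ for every $F\in\fin_{w}(x)$, and intersecting over all such $F$ yields $\bigcap_{F'\in\fin_{(bl)w}(x)}\ua F'\subseteq\bigcap_{F\in\fin_{w}(x)}\ua F=\ua x$.

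The argument requires no ingredient beyond the cited results, and the conceptual content is simply that restricting the witnessing finite sets to lie below $x$ neither destroys directedness nor enlarges the intersection $\bigcap\ua F$. Consequently the only points demanding care, and where I expect the main (though modest) obstacle to lie, are the nonemptiness check on $\da x\cap F$ needed to invoke Lemma~\ref{F intersection down x is directed}, and the careful bookkeeping of the two inclusions showing $\bigcap_{F\in\fin_{(bl)w}(x)}\ua F=\ua x$.
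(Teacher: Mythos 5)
Your proposal is correct and follows essentially the same route as the paper: sufficiency via Proposition~\ref{equivalent condition 1 of quasiexact} using $\fin_{(bl)w}(x)\subseteq\fin_{w}(x)$, and necessity via Lemma~\ref{F intersection down x is directed} together with the identity $\fin_{(bl)w}(x)=\{\da x\cap F: F\in\fin_{w}(x)\}$ and the two-sided comparison of intersections. The only (welcome) difference is that you make explicit the nonemptiness check on $\da x\cap F$ needed to invoke Lemma~\ref{F intersection down x is directed}, which the paper leaves implicit in the discussion preceding the proposition.
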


\begin{proof}
Note that $\fin_{(bl)w}(x)\subseteq\fin_{w}(x)$. The sufficiency is immediately from Proposition \ref{equivalent condition 1 of quasiexact}. For the necessity, assume that $P$ is quasiexact. Then for each $x\in P$,  $\fin_{w}(x)$ is directed and $\bigcap\{\ua G: G\in\fin_{w}(x)\}=\ua x$. By Lemma \ref{F intersection down x is directed}, we have	$\fin_{(bl)w}(x)$ is also directed. Note also that $\fin_{(bl)w}(x)=\{\da x\cap G: G\in\fin_{w}(x)\}$. It follows that $\bigcap_{F\in \fin_{(bl)w}(x)}\ua F=\bigcap\{\ua(\da x\cap G): G\in\fin_{w}(x)\}\subseteq\bigcap\{\ua G: G\in\fin_{w}(x)\}=\da x$. On the other hand, $\fin_{(bl)w}(x)\subseteq\fin_{w}(x)$, so $\bigcap_{F\in \fin_{(bl)w}(x)}\ua F\supseteq \bigcap\{\ua G: G\in\fin_{w}(x)\}=\da x$.
Hence, $\bigcap\{\ua F: F\in\fin_{(bl)w}(x)\}=\ua x$.
\end{proof}

Applying Propositions \ref{equivalent condition 1 of quasiexact} and \ref{equivalent condition 2 of quasiexact}, we can derive the following result.

\begin{thm}\label{5 equivalent statements of quasiexact}
For any dcpo $P$, the following statements are equivalent:
\begin{enumerate}
\renewcommand\labelenumi{($\alph{enumi}$)}
  \item $P$ is quasiexact;
  \item for any $x\in P$, the collection $\fin_{(bl)w}(x)$ is directed and $\bigcap\{\ua F: F\in\fin_{(bl)w}(x)\}=\ua x$;
  \item for any $x\in P$, there exists a directed subset $\mathcal{F}\subseteq\fin_{w}(x)$ such that $\bigcap_{F\in \mathcal{F}}\ua F=\ua x$;
  \item for any $x\in P$, there exists a directed subset $\mathcal{F}\subseteq\fin_{(bl)w}(x)$ such that $\bigcap_{F\in \mathcal{F}}\ua F=\ua x$.
\end{enumerate}
\end{thm}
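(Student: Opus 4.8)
The plan is to establish the four-way equivalence by chaining the two propositions already proved and then slotting statement (d) into the resulting cycle by means of two very short implications. Concretely, Proposition \ref{equivalent condition 1 of quasiexact} already yields (a) $\Leftrightarrow$ (c), and Proposition \ref{equivalent condition 2 of quasiexact} already yields (a) $\Leftrightarrow$ (b). Hence statements (a), (b) and (c) are mutually equivalent with no additional argument, and the only remaining task is to connect (d) to this cluster; I would do this by proving (b) $\Rightarrow$ (d) and (d) $\Rightarrow$ (c).

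First I would prove (b) $\Rightarrow$ (d). Assuming (b), the family $\fin_{(bl)w}(x)$ is itself directed and satisfies $\bigcap\{\ua F: F\in\fin_{(bl)w}(x)\}=\ua x$ for every $x\in P$. Taking $\mathcal{F}=\fin_{(bl)w}(x)$ then exhibits a directed subfamily of $\fin_{(bl)w}(x)$ whose up-sets intersect to $\ua x$, which is exactly what (d) asserts. Next I would prove (d) $\Rightarrow$ (c). This is equally direct: since $\fin_{(bl)w}(x)\subseteq\fin_{w}(x)$ by definition, any directed family $\mathcal{F}\subseteq\fin_{(bl)w}(x)$ with $\bigcap_{F\in\mathcal{F}}\ua F=\ua x$ is in particular a directed subfamily of $\fin_{w}(x)$ with the same intersection property, and this is precisely the content of (c).

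Putting these together gives the cycle (a) $\Leftrightarrow$ (b) $\Rightarrow$ (d) $\Rightarrow$ (c) $\Leftrightarrow$ (a), so all four statements are equivalent. I expect no genuine obstacle here, since the substantive work---the application of Rudin's Lemma through Lemma \ref{Lemma according to Rudin's Lemma}, together with the passage between $\fin_{w}(x)$ and $\fin_{(bl)w}(x)$ via Lemmas \ref{equivalence of weak way-below} and \ref{F intersection down x is directed}---has already been absorbed into Propositions \ref{equivalent condition 1 of quasiexact} and \ref{equivalent condition 2 of quasiexact}. The one point deserving a moment of care is the correct reading of "directed subset" in (d): in the step (b) $\Rightarrow$ (d) one must use that (b) asserts directedness of the \emph{entire} family $\fin_{(bl)w}(x)$, not merely the existence of some directed subfamily, so that $\fin_{(bl)w}(x)$ itself is an admissible witness for $\mathcal{F}$.
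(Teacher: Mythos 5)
Your proposal is correct and matches the paper's intent exactly: the paper derives the theorem directly from Propositions \ref{equivalent condition 1 of quasiexact} and \ref{equivalent condition 2 of quasiexact} (which give (a) $\Leftrightarrow$ (c) and (a) $\Leftrightarrow$ (b)), with statement (d) absorbed via the same trivial implications you spell out. Your explicit treatment of (b) $\Rightarrow$ (d) and (d) $\Rightarrow$ (c), including the observation that $\fin_{(bl)w}(x)\subseteq\fin_{w}(x)$ and that (b) provides the whole family as a witness, is precisely the routine verification the paper leaves to the reader.
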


\begin{prop}\label{product of quasiexact posets}
The cartesian product $\prod_{i\in I}P_{i}$ of a family of quasiexact dcpos is a quasiexact dcpo, provided that at most finitely many do not have a bottom element $\perp_{i}$.
\end{prop}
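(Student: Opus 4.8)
The plan is to verify characterization (c) of Theorem \ref{5 equivalent statements of quasiexact}: for each point of the product I must exhibit a directed subfamily of its $\fin_{w}$ whose up-sets intersect down to the principal up-set. Write $P=\prod_{i\in I}P_{i}$, which is a dcpo under the coordinatewise order with directed suprema computed coordinatewise, so that $\pr_{i}(\bigvee D)=\bigvee\pr_{i}(D)$ for every directed $D\subseteq P$. Let $J\subseteq I$ be the finite set of indices for which $P_{i}$ has no bottom element; for $i\notin J$ denote the bottom of $P_{i}$ by $\perp_{i}$. Fix $x=(x_{i})_{i\in I}\in P$. Since each factor is quasiexact, Theorem \ref{5 equivalent statements of quasiexact}(c) supplies a directed family $\mathcal{F}_{i}\subseteq\fin_{w}(x_{i})$ in $P_{i}$ with $\bigcap_{F\in\mathcal{F}_{i}}\ua F=\ua x_{i}$.

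For every finite $K\subseteq I$ with $J\subseteq K$ and every choice $\phi$ assigning to each $i\in K$ a member $\phi(i)\in\mathcal{F}_{i}$, I would form the finite \emph{box}
\[
B_{K,\phi}=\{p\in P: p_{i}\in\phi(i)\ \text{for}\ i\in K,\ p_{i}=\perp_{i}\ \text{for}\ i\notin K\},
\]
a nonempty finite subset of $P$ because $K$ is finite and each $\phi(i)$ is finite and nonempty. Note that $p\in\ua B_{K,\phi}$ holds exactly when $p_{i}\in\ua\phi(i)$ for all $i\in K$, the coordinates outside $K$ lying automatically above $\perp_{i}$. Let $\mathcal{B}$ be the collection of all such boxes; it is here that the hypothesis is used, since forcing $J\subseteq K$ guarantees that $\perp_{i}$ exists whenever it is invoked.

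The key step is to show $B_{K,\phi}\ll_{w}x$, so that $\mathcal{B}\subseteq\fin_{w}(x)$. Given a directed $D\subseteq P$ with $\bigvee D=x$, each projection $\pr_{i}(D)$ is directed with supremum $x_{i}$; hence for each of the finitely many $i\in K$ the relation $\phi(i)\ll_{w}x_{i}$ yields some $d^{(i)}\in D$ with $(d^{(i)})_{i}\in\ua\phi(i)$. Using directedness of $D$ together with finiteness of $K$, choose $d\in D$ above all the $d^{(i)}$ $(i\in K)$; then $d_{i}\in\ua\phi(i)$ for every $i\in K$, so $d\in\ua B_{K,\phi}$, as required. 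This fusion of finitely many coordinatewise witnesses through a single upper bound in $D$ is the crux of the argument, and it is precisely where finiteness of $K$ is essential.

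It remains to check that $\mathcal{B}$ is directed in the Smyth order and that $\bigcap_{B\in\mathcal{B}}\ua B=\ua x$. For directedness, given $B_{K,\phi}$ and $B_{K',\phi'}$, I set $K''=K\cup K'$ and pick $\phi''(i)\in\mathcal{F}_{i}$ coordinatewise: a common upper bound of $\phi(i),\phi'(i)$ when $i\in K\cap K'$, and $\phi(i)$ or $\phi'(i)$ on the remaining coordinates, exploiting that $\perp_{i}$ lies below everything; one then checks $B_{K'',\phi''}\subseteq\ua B_{K,\phi}\cap\ua B_{K',\phi'}$, i.e. $B_{K,\phi},B_{K',\phi'}\leq B_{K'',\phi''}$. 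The inclusion $\ua x\subseteq\bigcap_{B\in\mathcal{B}}\ua B$ is Remark \ref{Remark-3.1}. Conversely, if $y\ngeq x$ then $y_{i_{0}}\notin\ua x_{i_{0}}$ for some $i_{0}$, so $\bigcap_{F\in\mathcal{F}_{i_{0}}}\ua F=\ua x_{i_{0}}$ furnishes $\phi(i_{0})\in\mathcal{F}_{i_{0}}$ with $y_{i_{0}}\notin\ua\phi(i_{0})$; taking $K=J\cup\{i_{0}\}$ with any admissible choice on $K\setminus\{i_{0}\}$ produces a box $B\in\mathcal{B}$ with $y\notin\ua B$. Hence the intersection is exactly $\ua x$, and Theorem \ref{5 equivalent statements of quasiexact}(c) yields that $P$ is quasiexact. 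The principal obstacle, beyond the bookkeeping for directedness, is the $\ll_{w}$ verification of the third paragraph; the rest is routine once the boxes are in place.
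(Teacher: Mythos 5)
Your proof is correct and follows essentially the same construction as the paper's: finite boxes built from finite sets weakly way below each coordinate on a finite index set containing all bottomless coordinates, padded with bottoms elsewhere, then shown to form a directed family with $\bigcap\ua B=\ua x$ and fed into Proposition \ref{equivalent condition 1 of quasiexact}. If anything, yours is more complete, since you explicitly verify the step the paper only asserts (that each box is weakly way below $x$, via coordinatewise projections of directed sets and a single upper bound in $D$).
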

\begin{proof}
Obviously, $\prod_{i\in I}P_{i}$ is a dcpo. Let $I_{0}$ be a finite subset of $I$ such that $P_{i}$ has $\perp_{i}$ for any $i\in I\backslash I_{0}$ and $\Gamma=\{J: I_{0}\subseteq J\subseteq I, J~\mbox{is finite}\}$. Consider each $x=(x_{i})_{i\in I}\in\prod_{i\in I}P_{i}$. Take $\mathcal{F}_{J}=\{\prod_{j\in J}F_{j}\times\prod_{i\in I\backslash J}\{\perp_{i}\}: F_{j}\in\fin_{w}(x_{j}), j\in J\}$ for any $J\in\Gamma$ and $\mathcal{F}=\bigcup_{J\in\Gamma}\mathcal{F}_{J}$. Then $\mathcal{F}\subseteq \fin_{w}(x)$.
\begin{description}
  \item[Claim 1.] $\bigcap\{\ua F: F\in\mathcal{F}\}=\ua x$.
  
  If $y=(y_{i})_{i\in I}\notin\ua x$, then there exists $j_{0}\in I$ such that $y_{j_{0}}\notin \ua x_{j_{0}}$. By the quasiexactness of $P_{j_{0}}$, there exists $F_{j_{0}}\in\fin_{w}(x_{j_{0}})$ such that $y_{j_{0}}\notin \ua F_{j_{0}}$. Let $J_{0}=I_{0}\cup\{j_{0}\}$. Let $F_{0}=F_{j_{0}}\times\prod_{j\in J_{0}\backslash\{j_{0}\}}F_{j}\times\prod_{i\in I\backslash J}\{\perp_{i}\}$. Then $F_{0}\in \mathcal{F}_{J_{0}}\subseteq\mathcal{F}$, but $y\notin \ua F_{0}$. It follows that $y\notin \ua F_{j_{0}}\times\prod_{j\in J_{0}\backslash\{j_{0}\}}\ua F_{j}\times\prod_{i\in I\backslash J}P_{i}=\ua F_{0}$. Therefore, $\bigcap\{\ua F: F\in\mathcal{F}\}\subseteq\ua x$.
 It follows that $y\notin\bigcap\{\ua F: F\in\mathcal{F}\}$, whence $\bigcap\{\ua F: F\in\mathcal{F}\}=\ua x$.
  
  \item[Claim 2.] $\mathcal{F}$ is a directed subset of $\fin_{w}(x)$.

  Let $F_{1}, F_{2}\in \mathcal{F}$. Suppose that $F_{1}=\prod_{j'\in J_{1}}F_{j'_{1}}\times\prod_{i\in I\backslash J}\{\perp_{i}\}\in\mathcal{F}_{J_{1}}$ and $F_{2}=\prod_{j''\in J_{2}}F_{j''_{2}}\times\prod_{i\in I\backslash J}\{\perp_{i}\}\in\mathcal{F}_{J_{2}}$, where $J_{1}, J_{2}$ are finite and $I_{0}\subseteq J_{1}, J_{2}\subseteq I$, moreover, $F_{j'_{1}}\in\fin_{w}(x_{j'})$, $F_{j''_{2}}\in\fin_{w}(x_{j''})$ for any $j'\in J_{1}$, $j''\in J_{2}$. For any $j\in J_{1}\cap J_{2}$, $F_{j_{1}}, F_{j_{2}}\in\fin_{w}(x_{j})$. Note that $\fin_{w}(x_{j})$ is directed. Choose $F_{j_{3}}\in\fin_{w}(x_{j})$ such that $F_{j_{3}}\subseteq\ua F_{j_{1}}\cap\ua F_{j_{2}}$. For any $k'\in J_{1}\backslash J_{2}$ and $k''\in J_{2}\backslash J_{1}$, obviously, $F_{k'_{1}}\subseteq \ua F_{k'_{1}}\cap \ua \bot_{k'}$ and $F_{k''_{2}}\subseteq \ua F_{k''_{2}}\cap \ua \bot_{k''}$. Put $J_{3}=J_{1}\cup J_{2}$ and
 \vskip0.2cm
 \begin{center}
 $F_{3}=\prod_{j\in J_{1}\cap J_{2}}F_{j_{3}}\times
  \prod_{k'\in J_{1}\backslash J_{2}}F_{k'}\times
  \prod_{k''\in J_{2}\backslash J_{1}}F_{k''}\times
  \prod_{i\in I\backslash J_{3}}\{\perp_{i}\}.$
 \end{center}
 \vskip0.2cm
 Then $F_{3}\in \mathcal{F}_{J_{3}}$, $J_{3}$ is finite and $I_{0}\subseteq J_{3}\subseteq I$. Obviously, $F_{3}\subseteq \ua F_{1}\cap \ua F_{2}$.
\end{description}
By Proposition \ref{equivalent condition 1 of quasiexact}, $\prod_{J}P_{j\in J}$ is quasiexact.
\end{proof}

Some relationships among quasiexact dcpos, weak domains and quasicontinuous domains are  shown in  the following result.

\begin{prop}\label{Proposition-5.1}
\begin{enumerate}
\renewcommand{\labelenumi}{(\theenumi)}
  \item Every exact dcpo is quasiexact. Hence, every weak domain is a quasiexact dcpo.
  \item Every quasicontinuous domain is a quasiexact dcpo.
\end{enumerate}
\end{prop}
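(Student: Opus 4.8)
The plan is to prove both parts via the characterization of quasiexactness furnished by Proposition \ref{equivalent condition 1 of quasiexact} (equivalently, condition $(c)$ of Theorem \ref{5 equivalent statements of quasiexact}): for each $x\in P$ it suffices to exhibit a directed subfamily $\mathcal{F}\subseteq\fin_{w}(x)$ with $\bigcap_{F\in\mathcal{F}}\ua F=\ua x$. In both cases I would obtain such an $\mathcal{F}$ essentially for free from the defining data, so that the bulk of the work is two short verifications.

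For part (1), I would assume $P$ is an exact dcpo and, for a fixed $x\in P$, invoke Proposition \ref{equivalent condition of exact} to get a directed set $D\subseteq\dda_{w}x$ with $\bigvee D=x$. The natural candidate is the family of singletons $\mathcal{F}=\{\{d\}:d\in D\}$, each of which lies in $\fin_{w}(x)$ since $d\ll_{w}x$. Directedness in the Smyth order follows from that of $D$: given $d_{1},d_{2}\in D$, pick $d_{3}\in D$ above both, so that $\{d_{3}\}\subseteq\ua d_{1}\cap\ua d_{2}$. For the intersection I would argue both inclusions: each $d\in D$ satisfies $d\ll_{w}x$, hence $d\leq x$ by Lemma \ref{Lemma-Mushburn}(2), giving $\ua x\subseteq\ua d$ and so $\ua x\subseteq\bigcap_{d\in D}\ua d$; conversely any $z\in\bigcap_{d\in D}\ua d$ is an upper bound of $D$, whence $z\geq\bigvee D=x$, i.e.\ $z\in\ua x$. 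Thus $\bigcap_{d\in D}\ua d=\ua x$, and Proposition \ref{equivalent condition 1 of quasiexact} yields quasiexactness. The second assertion of (1) is then immediate, since a weak domain is by definition an exact dcpo and hence quasiexact.

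For part (2), I would assume $P$ is a quasicontinuous domain. The crucial observation is that $F\ll x$ implies $F\ll_{w}x$ for every finite $F\subseteq P$: if $D$ is directed with $\bigvee D\in\{x\}$, then $\bigvee D=x\in\ua x$, so $\sup D\in\ua\{x\}$ and the way-below condition delivers some $d\in D$ with $d\in\ua F$. Consequently $\fin(x)\subseteq\fin_{w}(x)$. Since $P$ is quasicontinuous, $\fin(x)$ is already directed and satisfies $\bigcap\{\ua F:F\in\fin(x)\}=\ua x$, so taking $\mathcal{F}=\fin(x)$ in Proposition \ref{equivalent condition 1 of quasiexact} gives at once that $P$ is quasiexact.

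Neither part presents a genuine obstacle. The only points demanding care are, in part (1), the verification that the up-sets of the singletons intersect exactly to $\ua x$ — which is precisely where the hypothesis $\bigvee D=x$ enters, via the least-upper-bound property — and, in part (2), the inclusion $\fin(x)\subseteq\fin_{w}(x)$, which rests on the elementary fact $H\subseteq\ua H$ forcing the more restrictive trigger $\bigvee D\in\{x\}$ of $\ll_{w}$ to imply the trigger $\sup D\in\ua x$ of $\ll$. Everything else is bookkeeping already packaged into the equivalences proved above.
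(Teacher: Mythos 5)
Your proof is correct and follows essentially the same route as the paper: both parts reduce to Proposition \ref{equivalent condition 1 of quasiexact}, with part (1) using the directed family of singletons $\{\{d\}: d\ll_{w}x\}$ and part (2) using $\fin(x)\subseteq\fin_{w}(x)$ together with the directedness and intersection properties from quasicontinuity. The only cosmetic differences are that you invoke Proposition \ref{equivalent condition of exact} for an arbitrary witnessing directed set where the paper takes $\dda_{w}x$ itself, and you verify the set-level implication $F\ll x\Rightarrow F\ll_{w}x$ directly where the paper cites the element-level Lemma \ref{Lemma-Mushburn}(1) — your version is, if anything, slightly more careful on that point.
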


\begin{proof}
(i) Let $P$ be an exact poset. For any $x\in P$, let $\mathcal{F}=\{\{d\}: d\in\dda_{w} x\}$. Note that $\dda_{w}x$ is directed with $\bigvee\dda_{w}x=x$. Then $\mathcal{F}$ is directed and
$$\begin{array}{ll}
& \bigcap\{\ua F: F\in\mathcal{F}\}\\
= & \bigcap\{\ua d: d\in\dda_{w} x\}\\
= & \ua(\bigvee\dda_{w} x)\\
= & \ua x.
\end{array}$$
By Proposition \ref{equivalent condition 1 of quasiexact}, $P$ is quasiexact.

(ii) Assume that $P$ is a quasicontinuous domain. Obviously, $P$ is a dcpo. Let $x\in P$. By the assumption and Lemma \ref{Lemma-Mushburn} (1), we get that $\fin(x)$ is a directed subset of $\fin_{w}(x)$ with $\bigcap\{\ua F: F\in\fin(x)\}=\ua x$. By Proposition \ref{equivalent condition 1 of quasiexact}, $P$ is quasiexact.
\end{proof}

Quasiexact posets have some weak and quasicontinuous domain-like features, so this terminology seems appropriate.

Shen et al. \cite{Shen-Wu-Zhao-2019} used two examples to show that quasicontinuous domains need not be weak domains, and that weak domains need not be quasicontinuous domains. Precisely, Example 3.11 in \cite{Shen-Wu-Zhao-2019} shows that quasicontinuous domains are not necessarily exact, and Example 3.12 in \cite{Shen-Wu-Zhao-2019} shows that weak domains are not necessarily quasicontinuous.

Here, we shall elaborate some more detailed relationships than those in \cite{Shen-Wu-Zhao-2019}.

The following example shows that quasicontinuous domains are not necessarily weakly increasing. Since every quasicontinuous domain is quasiexact, this example also shows that quasiexact posets need not to be  exact or  weakly increasing.

\begin{example}\label{a quasicontinuous domain, exact, not weakly increasing}
\emph{Let $P=\{a, b, c, d\}\cup\{x_{n}: n\in \mathbb{N}\}$ with the order
\begin{enumerate}
\renewcommand{\labelenumi}{(\roman{enumi})}
\item $a< b< c< d$;
\item $x_{m}<x_{n}$ whenever $m< n$ and $m,n\in\mathbb{N}$;
\item $x_{n}< c$ for any $n\in\mathbb{N}$,
\end{enumerate}
where $\{a, b, c, d\}\cap\{x_{n}: n\in \mathbb{N}\}=\varnothing$, see Figure 1.}
\begin{figure}[htbp!]\label{directed-4-points}
  \centering
  \includegraphics[width=7cm]{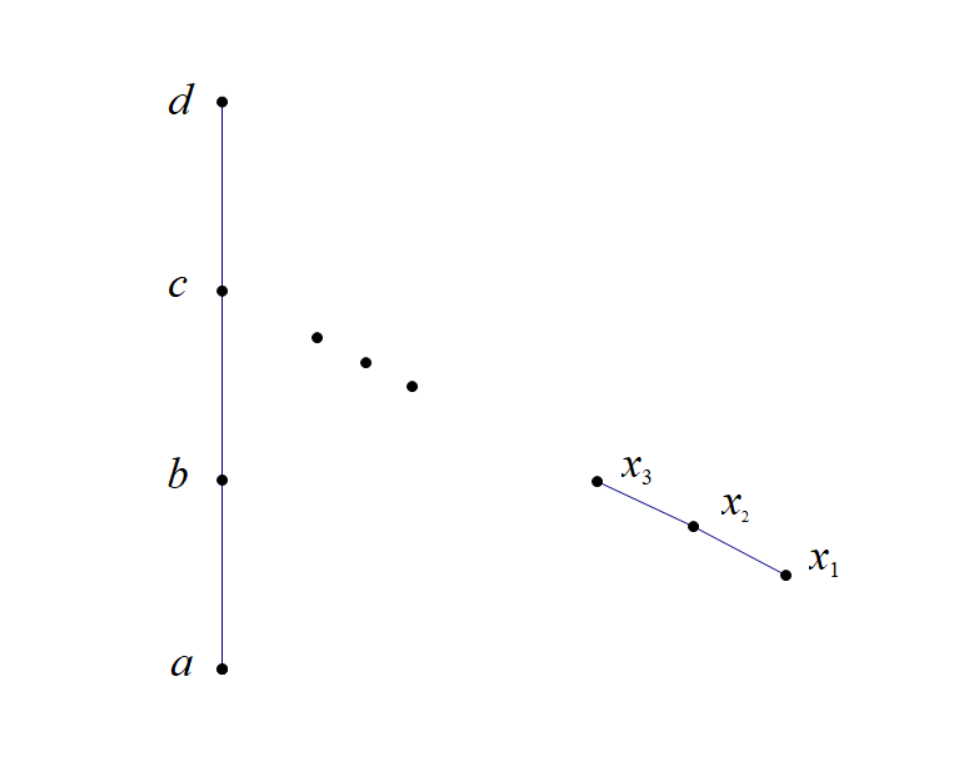}\\
  \caption{An exact quasicontinuous domain in which $\ll_{w}$ is not weakly increasing}
\end{figure}

\emph{In this example, we can trivially check the following facts.
\begin{enumerate}
\renewcommand{\labelenumi}{1-(\roman{enumi})}
  \item[(\rmnum{4})] If $x\in\{a, b, c\}$, then $F\in\fin(x)$ if and only if $F\cap\{x_{n}: n\in\mathbb{N}\}\neq\varnothing;$
  \item[(\rmnum{5})] If $x\in \{x_{n}: n\in\mathbb{N}\}\cup\{d\}$, then $F\in\fin(x)$ if and only if $F\cap\da x\neq\varnothing.$
\end{enumerate}
It is also trivial to check that $\fin(x)$ is directed with $\bigcap\{\ua F: F\in\fin(x)\}=\ua x$ for every $x\in P$, whence $P$ is a quasicontinuous domain. However, the relation $\ll_{w}$ is not weakly increasing. Note that $a\ll_{w}b\leq c\ll_{w}d$. Consider the directed set $D=\{x_{n}: n\in \mathbb{N}\}$. Obviously, $\bigvee D=c$, but $x_{n}\ngeqslant a$ for any $n\in\mathbb{N}$. Hence, $a\ll_{w}c$ does not hold. For the point $c\in P$, it is trivial to verify that $\dda c=\{x_{n}: n\in\mathbb{N}\}$, whence $\bigvee \dda c=c$. Hence, $P$ is exact.}
\end{example}

The following example shows that quasicontinuous domains need not be exact, even when the relation $\ll_{w}$ is weakly increasing.

\begin{ex}\label{quasiexact dcpo, not weak domain}
\emph{Let $P=(\mathbb{N}\times\{1, 2\})\cup\{\top\}$ and define an order by the following rules:
\begin{enumerate}
\renewcommand{\labelenumi}{(\roman{enumi})}
  \item $(m, i)<(n, i)$ if $m<n$ for all $m, n\in\mathbb{N}$ and $i=1, 2$;
  \item $(n, i)<\top$ for all $n\in \mathbb{N}$ and $i=1, 2$ (see Figure 2).
\end{enumerate}
}
\begin{figure}[htbp!]\label{top-element}
  \centering
  \includegraphics[width=8.5cm]{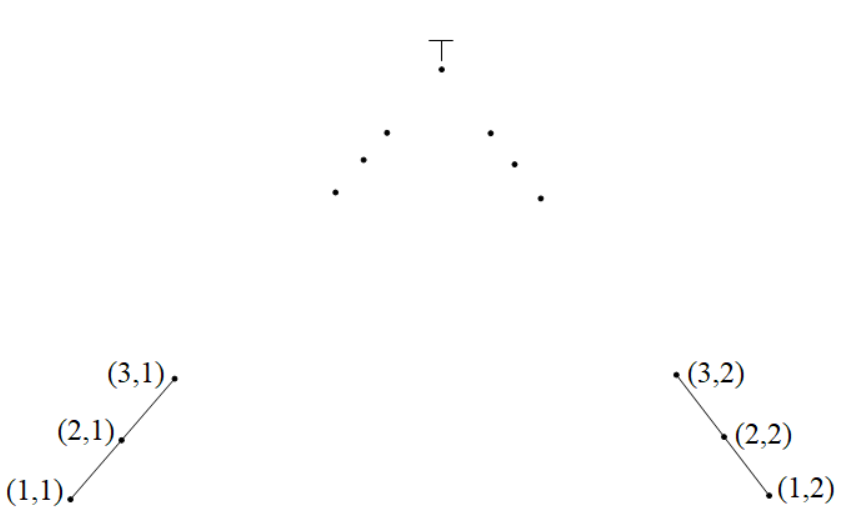}\\
  \caption{A non-exact quasicontinuous domain with $\ll_{w}$ weakly increasing}
\end{figure}
\emph{In this example, for any finite set $F\subseteq P$, $F\ll (m, 1)$ if and only if $F\cap\da (m, 1)\neq \varnothing$ and $F\cap\{(n, 2): n\in\mathbb{N}\}\neq \varnothing$;  $F\ll (m, 2)$ if and only if $F\cap\da (m, 2)\neq \varnothing$ and $F\cap\{(n, 1): n\in\mathbb{N}\}\neq \varnothing$; $F\ll\top$ if and only if $F\cap\{(n, 1): n\in \mathbb{N}\}\neq\varnothing$ and $F\cap\{(n, 2): n\in \mathbb{N}\}\neq\varnothing$. It is easy to verify that $P$ is a quasicontinuous domain. Thus, $P$ is quasiexact. Note also that for any finite set $F$, $F\ll_{w} (n, i)$ if and only if $F\cap\da(n, i)\neq\varnothing$, where $i=1, 2$; $F\ll_{w}\top$ if and only if $F\cap\{(n, 1): n\in \mathbb{N}\}\neq\varnothing$ and $F\cap\{(n, 2): n\in \mathbb{N}\}\neq\varnothing$. It is trivial to verify that the relation $\ll_{w}$ is weakly increasing. However, $P$ is not exact. Consider the top element $\top$. For any $(m, 1)$ and $m\in \mathbb{N}$, consider the directed set $D=\{(n, 2): n\in \mathbb{N}\}$. Then $\bigvee D=\top\geq(m, 1)$, but $(n, 2)\ngeqslant(m, 1)$ for any $(n, 2)$ and $n\in\mathbb{N}$. Thus, $(m, 1)\notin\dda_{w}\top$. Similarly, we can verify that $(m, 2)\notin\dda_{w}\top$ and $\top\notin\dda_{w}\top$ for any $i\in \mathbb{N}$. It follows that $\dda_{w}\top=\varnothing$. Hence, $P$ is not exact.}
\end{ex}

Examples \ref{a quasicontinuous domain, exact, not weakly increasing} and \ref{quasiexact dcpo, not weak domain} in the current paper also shows that quasiexacnesst does not imply stronger properties of $\ll_{w}$.

We take Johnstone's dcpo  to illustrate that weak domains are not necessarily quasicontinuous domain, hence quasiexact dcpos with the relation $\ll_{w}$ weakly increasing are not necessarily quasicontinuous.

\begin{ex}\label{Johnstone space}
\emph{({\bf Johnstone space}) Let $\mathbb{J}=\mathbb{N}\times(\mathbb{N}\cup\{\omega\})$ with ordering defined by}
\begin{enumerate}
\renewcommand{\labelenumi}{(\roman{enumi})}
  \item $(a, m)<(a, n)$ if $m<n$ for all $a, m, n\in\mathbb{N}$;
  \item $(a, m)<(b, \omega)$ if $m\leq b$ for all $a, b, m\in\mathbb{N}$.
\end{enumerate}
\emph{In this example, one can easily check the following fact: For any $m, n\in\mathbb{N}$, $\dda_{w}(m, n)=\da(m, n)$ and $\dda_{w}(m, \omega)=\{(m, n): n\in \mathbb{N}\}$, whence, $\bigvee\dda_{w}(m, n)=(m, n)$ and $\bigvee\dda_{w}(m, \omega)=(m, \omega)$. Thus, $\mathbb{J}$ is exact. It is trivial to show that $\ll_{w}$ is weakly increasing. However, $\mathbb{J}$ is not quasicontinuous. By the result in \cite[Exercise 8.2.14]{Goubault-book-2013}, $\mathbb{J}$ is a dcpo with the Scott space $\Sigma(\mathbb{J})$ non-sober. It follows that $P$ is not quasicontinuous.}
\end{ex}

\section{The wf topology and moderate meet continuity}\label{}
In this section, we investigate the links between the quasiexactness and some topologies on posets.


In \cite{Mushburn-2007}, when the family $\{\dua_{w} x: x\in P\}$ generates a topology on a poset $P$, then this topology is called the \emph{weak way-below topology} (\emph{wwb topology}, for short), denoted by $\tau_{wwb}(P)$. The topological space $(P, \tau_{wwb}(P))$  is simply written as $(P, \tau_{wwb})$.

For the following result, we refer the reader to the Theorems 3.8 and 3.10 in \cite{Mushburn-2007}.

\begin{lem}\label{wwb topology and Scott topology}\emph{\cite{Mushburn-2007}}\label{Mushburn's result}
Let $P$ be an exact poset. Then $\{\dua_{w} x: x\in P\}$ generates the wwb topology, which is finer than the Scott topology.
\end{lem}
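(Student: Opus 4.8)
The plan is to establish the two assertions in turn: first that the family $\mathcal{B}=\{\dua_{w} x : x\in P\}$ satisfies the base axioms, so that it genuinely generates (as a base) a topology $\tau_{wwb}(P)$ on every exact poset, and then that this topology contains $\sigma(P)$. Throughout I would lean on exactness in the form: for each $y$, the set $\dda_{w}y$ is directed (hence nonempty) and $\bigvee\dda_{w}y=y$, together with the mixed monotonicity laws of Lemma \ref{Lemma-Mushburn}.

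For the base property I would check the covering and refinement conditions. Covering is immediate: for $y\in P$, exactness makes $\dda_{w}y$ nonempty, so any $x$ with $x\ll_{w}y$ gives $y\in\dua_{w}x$. For refinement, suppose $z\in\dua_{w}x_{1}\cap\dua_{w}x_{2}$, that is $x_{1}\ll_{w}z$ and $x_{2}\ll_{w}z$, so $x_{1},x_{2}\in\dda_{w}z$. Since $\dda_{w}z$ is directed, there is $x_{3}\in\dda_{w}z$ with $x_{1}\leq x_{3}$ and $x_{2}\leq x_{3}$; in particular $z\in\dua_{w}x_{3}$. I then claim $\dua_{w}x_{3}\subseteq\dua_{w}x_{1}\cap\dua_{w}x_{2}$: if $x_{3}\ll_{w}w$, then from $x_{1}\leq x_{3}\ll_{w}w$ and Lemma \ref{Lemma-Mushburn}(3) we get $x_{1}\ll_{w}w$, i.e.\ $w\in\dua_{w}x_{1}$, and symmetrically $w\in\dua_{w}x_{2}$. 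Thus $\mathcal{B}$ is a base and generates $\tau_{wwb}(P)$.

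For the comparison with the Scott topology I would show that every Scott open $U$ is a union of members of $\mathcal{B}$. Fix $y\in U$. By exactness, $\bigvee\dda_{w}y=y\in U$ with $\dda_{w}y$ directed, so Scott openness produces some $x\in\dda_{w}y\cap U$; then $x\ll_{w}y$, i.e.\ $y\in\dua_{w}x$. Moreover $\dua_{w}x\subseteq\ua x\subseteq U$, because $x\ll_{w}w$ implies $x\leq w$ by Lemma \ref{Lemma-Mushburn}(2) and $U$ is an upper set. Hence each point of $U$ sits in a basic wwb-open set contained in $U$, so $U\in\tau_{wwb}(P)$ and $\sigma(P)\subseteq\tau_{wwb}(P)$, i.e.\ $\tau_{wwb}(P)$ is finer.

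The substantive point is that although $\ll_{w}$ need not be transitive, the two mixed laws of Lemma \ref{Lemma-Mushburn} ($x\ll_{w}y\Rightarrow x\leq y$, and $x\leq y\ll_{w}z\Rightarrow x\ll_{w}z$) together with the directedness of each $\dda_{w}z$ supplied by exactness are precisely what is needed. I expect the main obstacle to be the refinement step in the base verification: closing up the intersection $\dua_{w}x_{1}\cap\dua_{w}x_{2}$ requires producing a common lower bound $x_{3}\in\dda_{w}z$ and then pushing the weak-way-below relation through an $\leq$-step, so this is exactly the place where exactness (directedness of $\dda_{w}z$) is indispensable and cannot be replaced by a naive appeal to transitivity of $\ll_{w}$.
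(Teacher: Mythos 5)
Your proof is correct. Note, however, that the paper does not actually prove this lemma: it is quoted verbatim from Mushburn (Theorems 3.8 and 3.10 of \cite{Mushburn-2007}), so there is no in-paper argument to compare against; your write-up supplies the missing verification, and it is the standard one. Both halves use exactness exactly where it is needed: the base refinement step follows from directedness of $\dda_{w}z$ combined with Lemma \ref{Lemma-Mushburn}(3), which substitutes for the failing transitivity of $\ll_{w}$, and the inclusion $\sigma(P)\subseteq\tau_{wwb}(P)$ follows by applying Scott openness of $U$ to the directed set $\dda_{w}y$ with $\bigvee\dda_{w}y=y\in U$ and then using Lemma \ref{Lemma-Mushburn}(2) to get $\dua_{w}x\subseteq\ua x\subseteq U$. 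Your closing observation---that directedness of $\dda_{w}z$ cannot be replaced by a naive appeal to transitivity of $\ll_{w}$---is also accurate, since the paper notes (after Lemma \ref{Lemma-Mushburn}) that $\ll_{w}$ is transitive only when it already coincides with $\ll$.
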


For any subset $F\subseteq P$, we write $\dua_{w}F=\{a\in P: F\ll_{w}a\}$. Whenever $F=\{x\}$ for some $x\in P$, we replace $\dua_{w}\{x\}$ with $\dua_{w}x$.

\begin{lem}\label{basis of wf topology}
If $P$ is a quasiexact poset, then $\{\dua_{w}F: F\subseteq P, F~\emph{\mbox{is finite}}\}$ generates a topology on $P$.
\end{lem}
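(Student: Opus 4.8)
The plan is to show that $\mathcal{B}=\{\dua_{w}F: F\subseteq P,\ F~\text{is finite}\}$ is a \emph{base} for a topology, i.e.\ that it satisfies the two standard conditions: (i) $\mathcal{B}$ covers $P$, and (ii) for all $B_{1},B_{2}\in\mathcal{B}$ and every $x\in B_{1}\cap B_{2}$ there is $B_{3}\in\mathcal{B}$ with $x\in B_{3}\subseteq B_{1}\cap B_{2}$. Once these hold, the family $\mathcal{B}$ generates a topology in the usual way, so this is exactly what the statement asks.

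For the covering condition I would fix $x\in P$ and invoke quasiexactness: it forces $\fin_{w}(x)$ to be directed, hence in particular nonempty. Picking any $F\in\fin_{w}(x)$ gives $F\ll_{w}x$, that is $x\in\dua_{w}F$, so every point of $P$ lies in some basic set. For the refinement condition, suppose $x\in\dua_{w}F_{1}\cap\dua_{w}F_{2}$ with $F_{1},F_{2}$ finite. Unwinding the definition of $\dua_{w}$, this says precisely $F_{1},F_{2}\in\fin_{w}(x)$. Using directedness of $\fin_{w}(x)$ I can choose $F_{3}\in\fin_{w}(x)$ with $F_{1},F_{2}\leq F_{3}$ in the Smyth order, i.e.\ $F_{3}\subseteq\ua F_{1}\cap\ua F_{2}$. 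Since $F_{3}\in\fin_{w}(x)$ we immediately have $x\in\dua_{w}F_{3}$, and it remains only to establish $\dua_{w}F_{3}\subseteq\dua_{w}F_{1}\cap\dua_{w}F_{2}$.

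This last inclusion is the heart of the argument, and it amounts to a monotonicity property of $\dua_{w}(\cdot)$ with respect to the Smyth order: if $F_{3}\subseteq\ua F_{1}$, then $\dua_{w}F_{3}\subseteq\dua_{w}F_{1}$. I would deduce it from Lemma \ref{Lemma-3.2}. Given $a\in\dua_{w}F_{3}$, we have $F_{3}\ll_{w}a$; since $F_{3}\subseteq\ua F_{1}$, part (3) of that lemma yields $\ua F_{1}\ll_{w}a$, and then part (2) gives $F_{1}\ll_{w}a$, that is $a\in\dua_{w}F_{1}$. Applying this to both $F_{1}$ and $F_{2}$ delivers $\dua_{w}F_{3}\subseteq\dua_{w}F_{1}\cap\dua_{w}F_{2}$, and together with $x\in\dua_{w}F_{3}$ this completes condition (ii).

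The only genuine obstacle is this monotonicity step: it is exactly where the directedness of $\fin_{w}(x)$ (expressed through the Smyth inclusion $F_{3}\subseteq\ua F_{1}\cap\ua F_{2}$) has to be reconciled with the behaviour of $\ll_{w}$ under enlarging its left-hand argument, as recorded in Lemma \ref{Lemma-3.2}(2)--(3). Everything else is a routine translation between the notation $\dua_{w}F$ and membership in $\fin_{w}(x)$, and no appeal to Rudin's Lemma or to the intersection condition $\bigcap\{\ua F:F\in\fin_{w}(x)\}=\ua x$ is needed for this particular statement.
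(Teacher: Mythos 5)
Your proof is correct and takes essentially the same route as the paper's: the covering condition comes from quasiexactness forcing $\fin_{w}(x)$ to be directed (hence nonempty), and the refinement condition comes from directedness of $\fin_{w}(x)$ plus the monotonicity of $\ll_{w}$ in its left argument, which gives $\dua_{w}F_{3}\subseteq\dua_{w}F_{1}\cap\dua_{w}F_{2}$. The only difference is presentational: the paper asserts this monotonicity step directly ("if $F_{3}\ll_{w}e$, then $F_{1},F_{2}\ll_{w}e$"), whereas you justify it explicitly by combining parts (2) and (3) of Lemma \ref{Lemma-3.2}.
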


\begin{proof}
For any $a\in P$, $\fin_{w}(a)$ is directed with $\bigcap\{\ua F: F\in\fin_{w}(a)\}=\ua a$. Thus, $\fin_{w}(a)\neq \varnothing$. Choose arbitrarily $F\in\fin_{w}(a)$, then $a\in \dua_{w}F$.

Let $F_{1}, F_{2}$ be finite sets in $P$ with $b\in \dua_{w}F_{1}\cap\dua_{w}F_{2}$, i.e., $F_{1}\ll_{w} b$ and $F_{2}\ll_{w} b$. Note that $\fin_{w}(b)$ is directed. There exists $F_{3}\in\fin_{w}(b)$ such that $F_{3}\subseteq \ua F_{1}\cap\ua F_{2}$. For any $e\in P$, if $F_{3}\ll_{w}e$, then $F_{1}, F_{2}\ll_{w}e$, so $b\in\dua_{w}F_{3}\subseteq\dua_{w}F_{1}\cap\dua_{w}F_{2}$.
\end{proof}

Whenever the family $\{\dua_{w}F: F\subseteq P, F~\mbox{is finite}\}$ generates a topology on $P$, we call it the \emph{weak way-below finitely determined topology} (briefly, \emph{wf topology}) on $P$, denoted by $\tau_{wf}(P)$. The topological space $(P, \tau_{wf}(P))$  will be simply  written as $(P, \tau_{wf})$.

\begin{rem}\label{Remark-4.1}
\begin{enumerate}
\renewcommand{\labelenumi}{(\theenumi)}
  \item  For any quasiexact poset $P$, we have $\dua_{w}F\subseteq\Int_{\tau_{wf}}(\ua F)$ for every $F\in\mathcal{P}^{*}(P)$.
  \item If a poset $P$ admits both the wwb topology and the wf topology, then $\tau_{wwb}(P)\subseteq\tau_{wf}(P)$.
\end{enumerate}
\end{rem}

By Lemma \ref{wwb topology and Scott topology} and Remark \ref{Remark-4.1} (2), we have the following result.

\begin{lem}\label{wf is finer that Scott topology}
If $P$ is an exact poset, then $\sigma(P)\subseteq\tau_{wf}(P)$.
\end{lem}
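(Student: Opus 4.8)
The plan is to chain together the two results explicitly invoked in the statement. The goal is to show $\sigma(P)\subseteq\tau_{wf}(P)$ whenever $P$ is an exact poset. First I would observe that every exact poset is in particular quasiexact by Proposition \ref{Proposition-5.1}(1), so the wf topology $\tau_{wf}(P)$ is genuinely defined on $P$ via Lemma \ref{basis of wf topology}. Likewise, since $P$ is exact, Lemma \ref{wwb topology and Scott topology} guarantees that the wwb topology $\tau_{wwb}(P)$ is defined and satisfies $\sigma(P)\subseteq\tau_{wwb}(P)$. Thus both topologies under discussion exist, and it remains only to link them.

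The crucial middle step is Remark \ref{Remark-4.1}(2): if a poset admits both the wwb topology and the wf topology, then $\tau_{wwb}(P)\subseteq\tau_{wf}(P)$. Combining the containment from Lemma \ref{wwb topology and Scott topology}, namely $\sigma(P)\subseteq\tau_{wwb}(P)$, with this remark yields
\[
\sigma(P)\subseteq\tau_{wwb}(P)\subseteq\tau_{wf}(P),
\]
which is exactly the desired conclusion. So at the level of bookkeeping the proof is a two-line transitivity argument, and the statement's own preamble (``By Lemma \ref{wwb topology and Scott topology} and Remark \ref{Remark-4.1}(2)'') signals precisely this route.

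The one point that deserves care—and which I expect to be the only real content—is verifying that the hypotheses of the intermediate results are actually met. Lemma \ref{wwb topology and Scott topology} requires exactness, which is given. Remark \ref{Remark-4.1}(2) requires that $P$ admit \emph{both} topologies; the wf topology exists because exactness implies quasiexactness (Proposition \ref{Proposition-5.1}(1)) and then Lemma \ref{basis of wf topology} applies, while the wwb topology exists again by exactness through Lemma \ref{wwb topology and Scott topology}. If I wanted to make the inclusion $\tau_{wwb}(P)\subseteq\tau_{wf}(P)$ self-contained rather than cite Remark \ref{Remark-4.1}(2), the heart of the matter is that each subbasic wwb-open set $\dua_{w}x$ equals the subbasic wf-open set $\dua_{w}F$ for the singleton $F=\{x\}$ (using that $y\ll_{w}x$ is unambiguous between the pointwise and singleton-set readings, as noted after Lemma \ref{Lemma-3.2}); hence every wwb-subbasic set is already wf-open, and the inclusion of the generated topologies follows.

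The hard part is therefore not any calculation but simply confirming that the definedness conditions propagate correctly—in particular that exactness is strong enough to trigger both Lemma \ref{wwb topology and Scott topology} and (via quasiexactness) Lemma \ref{basis of wf topology}. Once that is in place, the chain of inclusions closes immediately and no further work is needed.
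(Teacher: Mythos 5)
Your proof is correct and is essentially the paper's own argument: the paper obtains this lemma exactly by combining Lemma \ref{wwb topology and Scott topology} with Remark \ref{Remark-4.1}(2), i.e.\ the chain $\sigma(P)\subseteq\tau_{wwb}(P)\subseteq\tau_{wf}(P)$ that you spell out. The one small caveat is that your appeal to Proposition \ref{Proposition-5.1}(1) to guarantee that $\tau_{wf}(P)$ exists is stated in the paper only for exact \emph{dcpos} while this lemma concerns exact posets; this is extra care the paper itself does not take (the statement implicitly presupposes that $\tau_{wf}(P)$ is defined), and exactness alone does yield the basis condition for $\{\dua_{w}F: F \mbox{ finite}\}$ directly (using directedness of $\dda_{w}x$ and Lemmas \ref{Lemma-Mushburn}(3) and \ref{Lemma-3.2}(3)), so nothing essential is affected.
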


Note that every quasicontinuous dcpo admits the wf topology. However, the following example shows that quasicontinuous dcpos do not necessarily admit the wwb topology.

\begin{ex}\label{Example-4.3}
\emph{Consider the set $P=(\mathbb{N}\times\{1, 2\})\cup\{\top\}$ with the order defined in Example \ref{quasiexact dcpo, not weak domain}, which shows that $P$ is quasicontinuous dcpo. It is trivial to check the following facts:}
\emph{\begin{enumerate}
\renewcommand{\labelenumi}{(\roman{enumi})}
  \item $\dua_{w}(n, i)=\{(m, i): m\geq n, m\in\mathbb{N}\}$ for any $n\in\mathbb{N}$ and $i=1, 2$;
  \item $\dua_{w}\top=\varnothing$.
\end{enumerate}}
\noindent\emph{Note that the family $\{\dua_{w}x: x\in P\}$ can not cover $\top$. Thus, $\{\dua_{w}x: x\in P\}$ can not generate the wwb topology.}
\end{ex}

Mushburn \cite{Mushburn-2007} constructed an example to show the wwb topology can be strictly finer than the Scott topology.

\begin{prop}\label{Proposition-5.6}
A poset $P$ is quasiexact if  for any nonempty $H\subseteq P$ and any $x\in P$, $H\ll_{w}x$ implies there exists a finite $F\subseteq \ua H$ such that $F\ll_{w}x$.
\end{prop}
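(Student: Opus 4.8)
The plan is to verify the definition of quasiexactness directly for each fixed $x\in P$: namely that $\fin_{w}(x)$ is nonempty and directed, and that $\bigcap\{\ua F:F\in\fin_{w}(x)\}=\ua x$. The unifying device is the hypothesis itself, which turns a statement of the form ``$H\ll_{w}x$ for some upper set $H$'' into ``there is a finite $F\subseteq H$ with $F\ll_{w}x$'', i.e.\ into a concrete member $F$ of $\fin_{w}(x)$ lying inside $H$. Thus in each part of the argument I would first manufacture a suitable nonempty upper set $H$ satisfying $H\ll_{w}x$ and then feed it to the hypothesis.

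For nonemptiness I would take $H=P$: since $\ua P=P$ and every directed set is nonempty, $P\ll_{w}x$ holds trivially, so the hypothesis produces some finite $F\subseteq P$ with $F\ll_{w}x$, whence $\fin_{w}(x)\neq\varnothing$. For directedness, given $F_{1},F_{2}\in\fin_{w}(x)$ I would set $H=\ua F_{1}\cap\ua F_{2}$, which is an upper set and is nonempty because $x\in\ua F_{1}\cap\ua F_{2}$ by Remark~\ref{Remark-3.1}. To see $H\ll_{w}x$, take any directed $D$ with $\bigvee D=x$; from $F_{1}\ll_{w}x$ and $F_{2}\ll_{w}x$ get witnesses $d_{1},d_{2}\in D$ in $\ua F_{1}$, $\ua F_{2}$ respectively, and use directedness of $D$ to dominate both by a single $d\in D\cap H$. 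The hypothesis then yields a finite $F_{3}\subseteq H=\ua F_{1}\cap\ua F_{2}$ with $F_{3}\ll_{w}x$, i.e.\ $F_{3}\in\fin_{w}(x)$ with $F_{1},F_{2}\leq F_{3}$, so $\fin_{w}(x)$ is directed.

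The intersection condition is the step I expect to be the main obstacle, and it is where the right choice of $H$ is decisive. The inclusion $\bigcap\{\ua F:F\in\fin_{w}(x)\}\supseteq\ua x$ is free from Remark~\ref{Remark-3.1}. For the reverse inclusion I would argue contrapositively: given $y\notin\ua x$ (that is, $y\not\geq x$), consider the complement of the principal ideal, $H=P\setminus\da y=\{z\in P:z\not\leq y\}$, which is an upper set and contains $x$, hence is nonempty. The crucial observation is that $H\ll_{w}x$: if some directed $D$ with $\bigvee D=x$ were contained in $\da y$, then $y$ would be an upper bound of $D$ and so $x=\bigvee D\leq y$, contradicting $y\not\geq x$; therefore every such $D$ meets $H$. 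Applying the hypothesis gives a finite $F\subseteq H$ with $F\ll_{w}x$, so $F\in\fin_{w}(x)$ while every $f\in F$ satisfies $f\not\leq y$, i.e.\ $y\notin\ua F$. Hence $y\notin\bigcap\{\ua F:F\in\fin_{w}(x)\}$, which yields the desired inclusion and completes the verification. I note that no directed-completeness of $P$ is used: throughout, only directed sets whose supremum equals $x$ enter, exactly as in the definition of $\ll_{w}$, so the argument is valid for an arbitrary poset and the whole difficulty is concentrated in recognizing that $P\setminus\da y\ll_{w}x$.
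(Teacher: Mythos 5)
Your proof is correct and follows essentially the same route as the paper's: the key steps --- showing $P\setminus\mathord{\downarrow}y\ll_{w}x$ for $y\notin\mathord{\uparrow}x$ to get the intersection condition, and showing $\mathord{\uparrow}F_{1}\cap\mathord{\uparrow}F_{2}\ll_{w}x$ to get directedness of $\fin_{w}(x)$ --- are exactly the paper's arguments. The only (minor, and slightly cleaner) difference is that you obtain nonemptiness of $\fin_{w}(x)$ uniformly by applying the hypothesis to $H=P$, whereas the paper instead treats the case where $x$ is the bottom element separately and otherwise extracts nonemptiness from the $P\setminus\mathord{\downarrow}y$ argument.
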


\begin{proof}
It is trivial to verify that $\fin_{w}(\bot)$ is directed with $\bigcap\{\ua F: F\ll_{w}\bot\}=\bot$ whenever the bottom element $\bot$ exists. Without loss of generality, assume that $x\neq \bot$. Note that $\ua x\subsetneqq P$. For any $y\notin \ua x$, we show $P\sm\da y\ll_{w}x$. Otherwise, there exists a directed set $D$ with $\bigvee D=x$, but $D\cap (P\sm\da y)=\varnothing$ since $P\sm\da y$ is an upper set. Thus, $D\subseteq\da y$. It follows that $x=\bigvee D\leq y$, a contradiction. By the hypothesis, there exists a finite set $F\subseteq P\sm\da y$ such that $F\ll_{w}x$. This implies that $\fin_{w}(x)\neq\varnothing$. Furthermore, note that $\ua F\subseteq P\sm\da y$. Then $\ua F\cap\da y=\varnothing$. It follows that $y\notin\ua F$. Hence, $\bigcap\{\ua F: F\in\fin_{w}(x)\}\subseteq \ua x$. It remains to show that $\fin_{w}(x)$ is directed. For this, let $F_{i}\in\fin_{w}(x)$ and $i=1, 2$. We show that $\ua F_{1}\cap \ua F_{2}\ll_{w}x$. For any directed set $D$, if $\bigvee D=x$, then $D\cap\ua F_{i}\neq\varnothing$, i.e, there exists $d_{i}\in D$ and $e_{i}\in F_{i}$ such that $d_{i}\geq e_{i}$ for $i=1, 2$. Choose $d_{3}\in D$ such that $d_{3}\geq d_{i}$ for $i=1, 2$. Then $d_{3}\geq e_{i}$ for $i=1, 2$. It follows that $\ua F_{1}\cap \ua F_{2}\ll_{w}x$. Note that $\ua F_{1}\cap \ua F_{2}$ is an upper set. By the hypothesis, there exists a finite set $F_{3}\subseteq \ua F_{1}\cap \ua F_{2}$ such that $F_{3}\ll_{w}x$. Therefore, $\fin_{w}(x)$ is directed.
\end{proof}

\begin{prop}
Any poset that has no infinite antichain is quasiexact.
\end{prop}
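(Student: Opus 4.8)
The plan is to verify the sufficient condition supplied by Proposition \ref{Proposition-5.6}: for every nonempty $H\subseteq P$ and every $x\in P$ with $H\ll_{w}x$, I must produce a finite $F\subseteq\ua H$ with $F\ll_{w}x$. By Lemma \ref{Lemma-3.2}(2) we have $H\ll_{w}x$ if and only if $\ua H\ll_{w}x$, so there is no loss in assuming that $H=U$ is an up-set. Moreover, since every directed set $D$ with $\bigvee D=x$ lies in $\da x$, I will look for the required finite set inside $S:=U\cap\da x$.

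The heart of the matter is the claim that, if $U$ is an up-set with $U\ll_{w}x$, then some finite $F\subseteq U$ satisfies $F\ll_{w}x$. I would prove this by contradiction: assume that \emph{no} finite subset of $U$ is weakly way below $x$, and construct an infinite antichain inside $S$, contradicting the hypothesis that $P$ has no infinite antichain. First one disposes of the degenerate case $S=\{x\}$: then every directed $D$ with $\bigvee D=x$ meets $U\cap\da x=\{x\}$, forcing $x\in D$, so $\{x\}\ll_{w}x$ and $F=\{x\}$ works. Otherwise choose $u_{0}\in S\setminus\{x\}$ and build $u_{0},u_{1},u_{2},\dots\in S$ by induction.

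The inductive step is where the no-infinite-antichain hypothesis is genuinely used, and is the main obstacle. Suppose $u_{0},\dots,u_{n}\in S$ have been chosen, all strictly below $x$. Since $F_{n}=\{u_{0},\dots,u_{n}\}$ is finite, the standing assumption gives $F_{n}\not\ll_{w}x$, so there is a directed $D$ with $\bigvee D=x$ and $D\cap\ua F_{n}=\varnothing$. Because $U\ll_{w}x$, $D$ meets $U$, say $w\in D\cap U$, whence $w\in S$. For each $i\le n$ we have $u_{i}<x=\bigvee D$, so $D\not\subseteq\da u_{i}$; pick $t_{i}\in D$ with $t_{i}\not\le u_{i}$. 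Using directedness, let $u_{n+1}\in D$ be an upper bound of the finite set $\{w,t_{0},\dots,t_{n}\}$. Then $u_{n+1}\ge w$ gives $u_{n+1}\in U\cap\da x=S$, and $u_{n+1}\neq x$ since $x\in\ua F_{n}$ is excluded from $D$; $u_{n+1}\ge t_{i}\not\le u_{i}$ gives $u_{n+1}\not\le u_{i}$; and $u_{n+1}\notin\ua F_{n}$ gives $u_{n+1}\not\ge u_{i}$. Hence $u_{n+1}$ is incomparable to every earlier $u_{i}$, and $\{u_{n}:n\in\mathbb{N}\}$ is an infinite antichain, the desired contradiction.

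Granting the claim, one applies it to $U=\ua H$ to obtain a finite $F\subseteq\ua H$ with $F\ll_{w}x$, which is exactly the hypothesis of Proposition \ref{Proposition-5.6}; therefore $P$ is quasiexact, and the whole argument needs only that $P$ is a poset, not a dcpo. I expect the only delicate point to be the construction above: the naive choice of $u_{n+1}$ (merely an element of $D$ avoiding $\ua F_{n}$) only guarantees $u_{n+1}\not\ge u_{i}$, and an infinite sequence with that property alone might form a descending chain rather than an antichain. The auxiliary elements $t_{i}$, which exploit that each $u_{i}$ lies \emph{strictly} below the directed supremum $x$, are precisely what upgrade ``not above'' to ``incomparable'' and thus force an honest antichain.
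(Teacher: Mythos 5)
Your proof is correct, and it reaches the result by a genuinely different route than the paper, even though both arguments funnel through Proposition \ref{Proposition-5.6}. The paper argues directly: by Zorn's lemma it picks a \emph{maximal} antichain $A$ in $\ua H\sm\ua x$, which is finite by hypothesis, and verifies $F=A\cup\{x\}\ll_{w}x$; the verification hinges on maximality (any $d\in D\cap\ua H$ incomparable with all of $A$ would enlarge the antichain, so $D\cap\ua H\subseteq\ua A\cup\da A$, and a cofinality argument for the finite set $A$ rules out $D\subseteq\da A$). You argue by contradiction: assuming no finite subset of the up-set $U=\ua H$ is weakly way below $x$, you recursively manufacture pairwise incomparable elements $u_{0},u_{1},\dots$ of $U\cap\da x$, using at stage $n$ a directed set witnessing $F_{n}\not\ll_{w}x$ together with the auxiliary elements $t_{i}$. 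What the paper's route buys is brevity: a single appeal to Zorn's lemma, no recursion, and an explicit description of the finite witness. What your route buys is that it isolates the combinatorial content --- failure of the finite-witness property literally produces an infinite antichain --- and it needs only dependent choice rather than Zorn's lemma; the subtlety you flagged (picking $t_{i}\not\le u_{i}$, which exists because $u_{i}$ is strictly below $\bigvee D$) is exactly what upgrades ``later terms are not above earlier ones'' to genuine incomparability, and without it the construction could collapse into a descending chain. All the individual steps check out: $x\in U\cap\da x$ handles the degenerate case, $w$ keeps the construction inside $U$, $D\cap\ua F_{n}=\varnothing$ keeps $u_{n+1}$ strictly below $x$ and not above any $u_{i}$, so the induction is sound.
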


\begin{proof}
Assume that $P$ is a poset with no infinite antichain. For any nonempty $H\subseteq P$ and any $x\in P$, if  $H\ll_{w}x$, then by Zorn's lemma we can pick a maximal antichain $A$ in $\ua H\sm \ua x$. Note that $F=A\cup\{x\}$ is finite, contained in $\ua H$. We show $F\ll_{w}x$. For this, let $D$ be a directed set with $\bigvee D=x$. Since $H\ll_{w}x$, eventually, $D$ is in $\ua H$. If $d=x$ for some $d\in D$, then $d\in D\cap \ua F$. Otherwise, $d\notin \ua x$ for any $d\in D$. If there exists $d\in D\cap\ua H$ such that $d\notin\ua A\cup\da A$, then $d$ is incomparable with each $a\in A$. Note that $d\notin \ua x$ and that $A$ is an antichain in $\ua H\sm \ua x$. It follows that $A\cup\{d\}$ is also an antichain in $\ua H\sm \ua x$, contradicting the maximality of $A$. So $D\cap\ua H\subseteq\ua A\cup\da A$. It cannot be the case $D\subseteq\da A$ since $\bigvee D\geq x$. Note that every $d$ large enough in $D$ is in $\ua H$, hence in $\ua A\cup\da A$, but not all of them in $\da A$, and those that are not must be in $\ua A$. Thus, $D\cap\ua F\supseteq D\cap\ua A\neq\varnothing$. Applying Proposition \ref{Proposition-5.6}, we conclude that $P$ is quasiexact.
\end{proof}

Following the characterization of meet continuous posets by means of Scott topology, we define the following meet continuity.

\begin{defi}
A poset $P$ admitting the wf topology is said to be \emph{moderately meet continuous} if for any $x\in P$ and directed subset $D$, $x\leq \bigvee D$ implies that $x\in \cl_{\tau_{wf}}(\da D\cap\da x)$.
\end{defi}

\begin{lem}\label{wf topology}
Let $P$ be a moderately meet continuous poset. Then $\Int_{\tau_{wf}}(\ua F)\subseteq \bigcup\{\dua_{w}x: x\in F\}$ for any $F\in\mathcal{P}^{*}(P)$.
\end{lem}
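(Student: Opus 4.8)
The plan is to argue by contradiction after passing to a basic neighbourhood. Suppose $a\in\Int_{\tau_{wf}}(\ua F)$. Since $P$ is moderately meet continuous it admits the wf topology, so $\{\dua_{w}H:H\subseteq P\text{ finite}\}$ is a base and I may choose a finite $G$ with $a\in\dua_{w}G$ and $\dua_{w}G\subseteq\ua F$. The goal is to produce a \emph{single} $x\in F$ with $x\ll_{w}a$, i.e. $a\in\dua_{w}x$. So I assume toward a contradiction that $x\not\ll_{w}a$ for every $x\in F$; this gives, for each $x\in F$, a directed set $D_{x}$ with $\bigvee D_{x}=a$ and $D_{x}\cap\ua x=\varnothing$. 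Note $D_{x}\subseteq\da a$, hence $\da D_{x}\cap\da a=\da D_{x}$.

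The heart of the argument is to use moderate meet continuity to \emph{combine} these directed sets. I would prove the following by induction on $|S|$: for every $y\le a$ and every finite $S\subseteq F$, $y\in\cl_{\tau_{wf}}\big(\bigcap_{x\in S}\da D_{x}\cap\da y\big)$. The case $|S|=1$ is moderate meet continuity applied to the point $y$ and the set $D_{x}$ (valid since $y\le a=\bigvee D_{x}$). For the inductive step, write $S'=S\cup\{x_{0}\}$ and fix $y\le a$. Moderate meet continuity gives $y\in\cl_{\tau_{wf}}(\da D_{x_{0}}\cap\da y)$; and for each $z\in\da D_{x_{0}}\cap\da y$ the induction hypothesis (applied at $z\le a$) yields $z\in\cl_{\tau_{wf}}\big(\bigcap_{x\in S}\da D_{x}\cap\da z\big)$. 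Since $z\in\da D_{x_{0}}$ forces $\da z\subseteq\da D_{x_{0}}$ and $z\le y$ forces $\da z\subseteq\da y$, the set $\bigcap_{x\in S}\da D_{x}\cap\da z$ is contained in $\bigcap_{x\in S'}\da D_{x}\cap\da y$, so $z$ lies in the closure of the latter. As this holds for all such $z$, the whole set $\da D_{x_{0}}\cap\da y$ lies in that closure, and therefore so does $y$. Taking $y=a$ and $S=F$ (when $F$ is finite) gives $a\in\cl_{\tau_{wf}}\big(\bigcap_{x\in F}\da D_{x}\big)$.

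This closes the contradiction for \emph{finite} $F$: since $\dua_{w}G$ is an open neighbourhood of $a$, there is $c\in\dua_{w}G\cap\bigcap_{x\in F}\da D_{x}$. From $c\in\dua_{w}G\subseteq\ua F$ we get $c\ge x_{0}$ for some $x_{0}\in F$, while $c\in\da D_{x_{0}}$ gives $c\le d$ for some $d\in D_{x_{0}}$; then $d\ge c\ge x_{0}$, i.e. $d\in D_{x_{0}}\cap\ua x_{0}$, contradicting $D_{x_{0}}\cap\ua x_{0}=\varnothing$. Hence some $x\in F$ satisfies $x\ll_{w}a$, which is exactly $a\in\bigcup\{\dua_{w}x:x\in F\}$.

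The main obstacle is the passage from finite to arbitrary $F\in\mathcal{P}^{*}(P)$. The combination lemma only delivers $a\in\cl_{\tau_{wf}}\big(\bigcap_{x\in S}\da D_{x}\big)$ for \emph{finite} $S$, and for such an $S$ the extracted witness $c$ merely lands above some $x_{0}\in F\setminus S$, so no contradiction surfaces while $F$ is infinite. I would handle this by first reducing the inclusion $\dua_{w}G\subseteq\ua F$ to a finite subfamily, that is, finding a finite $F_{0}\subseteq F$ with $a\in\Int_{\tau_{wf}}(\ua F_{0})$, and then applying the finite case to $F_{0}$. Establishing this finite reduction is the delicate step, because the basic open sets $\dua_{w}H$ need not be upper sets and the sets $\ua x$ are not open, so the usual compactness heuristics do not apply directly; I expect it to rely on the directedness of $\fin_{w}(a)$ together with Rudin's Lemma, much as in Lemma~\ref{Lemma according to Rudin's Lemma}.
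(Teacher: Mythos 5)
Your finite-$F$ argument is correct and is, in substance, exactly the paper's proof. The paper also argues by contradiction: it takes, for each $x_{i}\in F=\{x_{1},\ldots,x_{n}\}$, a directed set $D_{i}$ with $\bigvee D_{i}=y$ and $x_{i}\notin\da D_{i}$, and then applies moderate meet continuity $n$ times to build a descending chain $z_{1}\geq z_{2}\geq\cdots\geq z_{n}$ inside the open set $U=\Int_{\tau_{wf}}(\ua F)$, ending with $z_{n}\in U\cap\bigcap_{i=1}^{n}\da D_{i}\subseteq\ua F$ and the same contradiction you reach. Your induction on $|S|$, phrased as a closure statement, is just a rigorous packaging of the paper's ``the rest can be done in the same manner''; and your passage to a basic neighbourhood $\dua_{w}G$ is immaterial for the finite case, since one can work with $U$ itself, as the paper does.

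The one point where you diverge is the passage to infinite $F$, and here the relevant news is that the paper does not take this step either: although the statement quantifies over all $F\in\mathcal{P}^{*}(P)$, the paper's proof opens with ``let $F=\{x_{1},x_{2},\cdots,x_{n}\}$'' and never returns to the general case. So the ``delicate step'' you flag is a genuine observation about the paper's statement, not a defect of your proposal relative to the paper; note also that in the later applications only finite sets are fed into this lemma (Theorem \ref{moderately meet continuous quasiexact posets are meet continuous and exact} invokes Corollary \ref{Corollary-4.9} only for $F\in\fin_{(bl)w}(a)$). Two cautions about the repair you sketch, though. First, the directedness of $\fin_{w}(a)$ you want to lean on is precisely quasiexactness, which is not among the hypotheses of this lemma (only moderate meet continuity is assumed), so Lemma \ref{Lemma according to Rudin's Lemma} is not available as stated. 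Second, the natural reduction --- pass to a finite $G$ with $a\in\dua_{w}G\subseteq\ua F$, note $\dua_{w}G\subseteq\ua G$ (Remark \ref{Remark-3.1}), and apply the finite case to $G$ --- produces only some $g\in G$ with $g\ll_{w}a$, not an element of $F$, so it does not close the infinite case either. Whether the lemma as literally stated holds for infinite $F$ is left open both by your argument and by the paper's.
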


\begin{proof}
For convenience, we write $\Int_{\tau_{wf}}(\ua F)=U$ and let $F=\{x_{1}, x_{2}, \cdots, x_{n}\}$. Assume $x\in U$, but $y\notin\bigcup\{\dua_{w}x: x\in F\}$, that is, $x_{i}\ll_{w}y$ does not hold for any $i=1, 2, \cdots, n$. Thus, there exists a directed subset $D_{i}\subseteq P$ such that $\bigvee D_{i}=y$, but $x_{i}\notin\da D_{i}$. By the moderately meet continuity, $y\in\cl_{\tau_{wf}}(\da D_{1}\cap\da y)$. Since $y\in U$ and $U\in\tau_{wf}(P)$, then $U\cap (\da D_{1}\cap\da y)\neq\varnothing$. Pick $z_{1}\in U\cap \da D_{1}\cap\da y$. Note that $z_{1}\leq y=\bigvee D_{2}$. By the moderately meet continuity again, we have $z_{1}\in\cl_{\tau_{wf}}(\da D_{2}\cap\da z_{1})$, whence $U\cap\da D_{2}\cap\da z_{1}\neq\varnothing$. Pick $z_{2}\in U\cap\da D_{2}\cap\da z_{1}$. The rest can be done in the same manner. In other words, we can pick $z_{i}\in U\cap\da D_{i}\cap\da z_{i-1}$ for any $i$ ($1\leq i\leq n$). Note that $z_{i}\leq z_{i-1}$ and $z_{i-1}\in \da D_{i-1}$ for all $i$. It follows that $z_{n}\in \da D_{i}$ for all $i$. Thus, $z_{n}\in\bigcap^{n}_{i=1}\da D_{i}$. Also note that $z_{n}\in U=\mbox{int}_{\tau_{wf}}(\ua F)\subseteq\ua F$. Hence, there exists $i_{0}$ ($1\leq i_{0}\leq n$) such that $x_{i_{0}}\leq z_{n}\in \da D_{i_{0}}$, showing that $x_{i_{0}}\in\da D_{i_{0}}$, a contradiction. Therefore, $\Int_{\tau_{wf}}(\ua F)\subseteq\bigcup\{\dua_{w}x: x\in F\}$.
\end{proof}

Applying Lemmas \ref{Lemma-3.2} (3), \ref{wf topology} and Remark \ref{Remark-4.1} , we derive the following conclusion.

\begin{cor}\label{Corollary-4.9}
Let $P$ be a moderately meet continuous quasiexact poset. Then
\vskip0.3cm
\centerline{$\dua_{w}F=\bigcup\{\dua_{w}x: x\in F\}=\Int_{\tau_{wf}}(\ua F)$}
\vskip0.3cm
\noindent for any $F\in\mathcal{P}^{*}(P)$.
\end{cor}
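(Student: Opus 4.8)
The plan is to establish the chain of equalities $\dua_{w}F=\bigcup\{\dua_{w}x: x\in F\}=\Int_{\tau_{wf}}(\ua F)$ for an arbitrary $F\in\mathcal{P}^{*}(P)$ by assembling the three ingredients cited before the corollary. First I would dispose of the leftmost equality. By Lemma \ref{Lemma-3.2} (1), $F\ll_{w}a$ holds exactly when $x\ll_{w}a$ for every $x\in F$, so a priori $\dua_{w}F=\bigcap\{\dua_{w}x: x\in F\}$ rather than the union. Hence I must be careful: the equality with the \emph{union} is the nontrivial content, and it will be forced by the geometry of the wf topology rather than by Lemma \ref{Lemma-3.2} (1) alone. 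The correct reading is that one direction, $\bigcup\{\dua_{w}x:x\in F\}\subseteq\dua_{w}F$, follows from Lemma \ref{Lemma-3.2} (3): if $x_0\in F$ and $x_0\ll_{w}a$, then enlarging the weakly-way-below set from $\{x_0\}$ to the superset $F\supseteq\{x_0\}$ preserves the relation, giving $F\ll_{w}a$, i.e. $a\in\dua_{w}F$.

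Next I would handle the two remaining inclusions, which close the cycle. From Remark \ref{Remark-4.1} (1), applied to the set $F$ itself, I have $\dua_{w}F\subseteq\Int_{\tau_{wf}}(\ua F)$; this uses only quasiexactness. For the final inclusion $\Int_{\tau_{wf}}(\ua F)\subseteq\bigcup\{\dua_{w}x:x\in F\}$, I would invoke Lemma \ref{wf topology} verbatim, since $P$ is assumed moderately meet continuous and that lemma delivers exactly this containment for any nonempty $F$. Chaining the three inclusions
$$\bigcup\{\dua_{w}x:x\in F\}\subseteq\dua_{w}F\subseteq\Int_{\tau_{wf}}(\ua F)\subseteq\bigcup\{\dua_{w}x:x\in F\}$$
forces all three sets to coincide, which is precisely the asserted identity.

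The step I expect to require the most care is the very first one, reconciling the apparent tension with Lemma \ref{Lemma-3.2} (1): one must resist concluding $\dua_{w}F=\bigcap\{\dua_{w}x:x\in F\}$ and instead recognize that the union equals the intersection \emph{here} only because of the extra topological input. Concretely, the content of the corollary is that under moderate meet-continuity and quasiexactness the set $\dua_{w}F$ collapses so that membership in some single $\dua_{w}x$ is equivalent to membership in every $\dua_{w}x$ for $x\in F$ — a nonobvious rigidity. I would therefore phrase the argument purely as the inclusion cycle above, citing Lemma \ref{Lemma-3.2} (3) for the easy union-to-$\dua_{w}F$ step, Remark \ref{Remark-4.1} (1) for the middle step, and Lemma \ref{wf topology} for the closing step, so that the squeeze is transparent and no direct manipulation of the weak way-below relation between $F$ and its points is needed beyond those cited facts.
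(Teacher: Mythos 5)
Your chain of inclusions
\[
\bigcup\{\dua_{w}x:x\in F\}\;\subseteq\;\dua_{w}F\;\subseteq\;\Int_{\tau_{wf}}(\ua F)\;\subseteq\;\bigcup\{\dua_{w}x:x\in F\},
\]
justified by Lemma~\ref{Lemma-3.2}~(3), Remark~\ref{Remark-4.1}~(1) and Lemma~\ref{wf topology} respectively, is precisely the paper's own proof: the paper derives the corollary by ``applying Lemmas~\ref{Lemma-3.2}~(3), \ref{wf topology} and Remark~\ref{Remark-4.1}.'' So the mathematical core of your proposal is correct and identical in route to the paper's.

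However, the framing in your first and last paragraphs rests on a misreading of Lemma~\ref{Lemma-3.2}~(1), and the side claims you draw from it are false. That lemma says $G\ll_{w}H$ if and only if $G\ll_{w}x$ for all $x\in H$: it decomposes the set on the \emph{right} of $\ll_{w}$ pointwise. In $\dua_{w}F=\{a\in P: F\ll_{w}a\}$ the set $F$ sits on the \emph{left}, and the right-hand side is the singleton $\{a\}$, so Lemma~\ref{Lemma-3.2}~(1) says nothing here. In particular, $\dua_{w}F=\bigcap\{\dua_{w}x:x\in F\}$ is not an ``a priori'' identity; it is false in general, since enlarging the left-hand set only makes the relation easier to satisfy --- which is exactly Lemma~\ref{Lemma-3.2}~(3), and is why $\bigcup\{\dua_{w}x:x\in F\}\subseteq\dua_{w}F$ is the inclusion that holds in \emph{any} poset, with no hypotheses. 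Consequently there is no ``tension'' to resolve, and the corollary does not assert the rigidity you describe: it does not say that membership in some $\dua_{w}x$ is equivalent to membership in every $\dua_{w}x$ for $x\in F$; it identifies $\dua_{w}F$ with the union and says nothing about the intersection. None of this damages your actual three-step squeeze, which never uses the erroneous claim, but that surrounding commentary should be deleted, as it misstates both Lemma~\ref{Lemma-3.2}~(1) and the content of the corollary.
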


By Remark \ref{Remark-4.1} (2) and Corollary \ref{Corollary-4.9}, we deduce the following result strengthening Mushburn's result in \cite[Theorem 10]{Mushburn-2007}.

\begin{thm}
If $P$ is a moderately meet continuous quasiexact poset, then $\sigma(P)\subseteq\tau_{wwb}(P)=\tau_{wf}(P)$.
\end{thm}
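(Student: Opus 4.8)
The plan is to split the statement into the topological identity $\tau_{wwb}(P)=\tau_{wf}(P)$ and the inclusion $\sigma(P)\subseteq\tau_{wf}(P)$; combining them yields $\sigma(P)\subseteq\tau_{wwb}(P)=\tau_{wf}(P)$. For the identity I would lean on Corollary \ref{Corollary-4.9} (this is where moderate meet continuity genuinely enters), while for the inclusion I would run a Rudin-type argument that only uses quasiexactness and directed completeness.

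First I would show that $P$ admits the wwb topology and that it coincides with $\tau_{wf}$. By Corollary \ref{Corollary-4.9}, taking $F=\{x\}$ gives $\dua_{w}x=\Int_{\tau_{wf}}(\ua x)$, so every set $\dua_{w}x$ is $\tau_{wf}$-open; and for an arbitrary finite $F$ the same corollary gives $\dua_{w}F=\bigcup\{\dua_{w}x:x\in F\}$. Since the sets $\dua_{w}F$ form a basis of $\tau_{wf}$ (Lemma \ref{basis of wf topology}), every $\tau_{wf}$-open set is a union of sets $\dua_{w}x$. Hence $\{\dua_{w}x:x\in P\}$ is itself a basis of $\tau_{wf}$; in particular it generates a topology, so the wwb topology exists and $\tau_{wwb}(P)=\tau_{wf}(P)$. (Equivalently, once existence is known, Remark \ref{Remark-4.1}(2) gives $\tau_{wwb}(P)\subseteq\tau_{wf}(P)$ and the displayed identity of Corollary \ref{Corollary-4.9} supplies the reverse inclusion.)

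Next I would prove $\sigma(P)\subseteq\tau_{wf}(P)$. Fix a Scott-open $U$ and a point $a\in U$, and set $K=P\sm U$, which is Scott-closed, hence both downward closed and closed under existing directed suprema. By Theorem \ref{5 equivalent statements of quasiexact}(b) the family $\fin_{(bl)w}(a)$ is directed in the Smyth order and $\bigcap\{\ua F:F\in\fin_{(bl)w}(a)\}=\ua a\subseteq U$. I would argue by contradiction, assuming $\ua F\cap K\neq\varnothing$ for every $F\in\fin_{(bl)w}(a)$. Because $K$ is downward closed, any element of $\ua F\cap K$ dominates some $f\in F$ which then also lies in $K$, so $F\cap K\neq\varnothing$; and, again using downward closedness of $K$, one checks that $\{F\cap K:F\in\fin_{(bl)w}(a)\}$ is a directed family of nonempty finite sets. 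Rudin's Lemma (Lemma \ref{Rudin's Lemma}) then produces a directed set $D\subseteq\bigcup_{F}(F\cap K)$ meeting every $F\cap K$. Now $D\subseteq K\cap\da a$, so $\bigvee D$ exists by directed completeness, lies in $K$ by Scott-closedness, and satisfies $\bigvee D\leq a$; on the other hand $D$ meets each $F$, whence $\bigvee D\in\bigcap_{F}\ua F=\ua a$, i.e. $\bigvee D\geq a$. Thus $\bigvee D=a\in K$, contradicting $a\in U$.

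Consequently some $F_{0}\in\fin_{(bl)w}(a)$ satisfies $\ua F_{0}\subseteq U$, and since $\dua_{w}F_{0}\subseteq\ua F_{0}$ (Remark \ref{Remark-3.1}) while $a\in\dua_{w}F_{0}$, the basic $\tau_{wf}$-open set $\dua_{w}F_{0}$ witnesses $a\in\Int_{\tau_{wf}}(U)$; as $a\in U$ was arbitrary, $U\in\tau_{wf}(P)$. Together with the first part this proves the theorem. I expect the main obstacle to be exactly this last inclusion: passing from the pointwise data $\bigcap\{\ua F:F\in\fin_{(bl)w}(a)\}=\ua a$ to a single $F_{0}$ with $\ua F_{0}\subseteq U$ is a well-filteredness phenomenon, and the crux is to manufacture, via Rudin's Lemma, a directed set inside $K\cap\da a$ whose supremum is forced to be $a$. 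This is where quasiexactness (the directedness of $\fin_{(bl)w}(a)$) and directed completeness are used in tandem, whereas the Corollary \ref{Corollary-4.9} input (and hence moderate meet continuity) is needed only for the identity $\tau_{wwb}(P)=\tau_{wf}(P)$.
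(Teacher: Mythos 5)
Your first half is exactly the paper's argument and is correct: by Corollary \ref{Corollary-4.9}, each $\dua_{w}x=\Int_{\tau_{wf}}(\ua x)$ is $\tau_{wf}$-open and each basic set $\dua_{w}F$ equals $\bigcup_{x\in F}\dua_{w}x$, so the wwb topology exists and, with Remark \ref{Remark-4.1}(2), coincides with $\tau_{wf}(P)$. The gap is in your proof of $\sigma(P)\subseteq\tau_{wf}(P)$: the theorem is stated for \emph{posets}, and your Rudin-style argument invokes directed completeness at the decisive moment (``$\bigvee D$ exists by directed completeness''). Directed completeness is not among the hypotheses, and it does not follow from them: $P=[0,1)$ with its usual order is an exact, quasiexact, moderately meet continuous poset in which the directed set $\{1-1/n: n\in\mathbb{N}\}$ has no supremum. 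So the directed set $D\subseteq K\cap\da a$ produced by Lemma \ref{Rudin's Lemma} need not have a supremum, and the contradiction $\bigvee D=a\in K$ cannot be reached. (Relatedly, you cite Theorem \ref{5 equivalent statements of quasiexact}(b), which is also a statement about dcpos; the fact you actually need -- that quasiexactness makes $\fin_{(bl)w}(a)$ directed with $\bigcap\{\ua F: F\in\fin_{(bl)w}(a)\}=\ua a$ -- does hold for posets, via the necessity half of Proposition \ref{equivalent condition 2 of quasiexact}, whose proof uses no directed suprema.)

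This also undercuts your closing claim that moderate meet continuity is needed only for $\tau_{wwb}(P)=\tau_{wf}(P)$. In the stated generality it is needed for the Scott inclusion too: the paper's route is to use Corollary \ref{Corollary-4.9} to prove that a moderately meet continuous quasiexact poset is \emph{exact} (Theorem \ref{moderately meet continuous quasiexact posets are meet continuous and exact}; its proof is dcpo-free), and then to apply Mushburn's result (Lemma \ref{Mushburn's result}) together with Remark \ref{Remark-4.1}(2), i.e.\ Lemma \ref{wf is finer that Scott topology}, to get $\sigma(P)\subseteq\tau_{wwb}(P)\subseteq\tau_{wf}(P)$ for exact posets; Mushburn's step needs no directed completeness, since for Scott-open $U$ and $a\in U$ exactness yields some $y\in\dda_{w}a\cap U$, whence $a\in\dua_{w}y\subseteq\ua y\subseteq U$. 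What your Rudin argument genuinely establishes is a different and independently interesting statement: for quasiexact \emph{dcpos}, $\sigma(P)\subseteq\tau_{wf}(P)$ with no meet-continuity hypothesis at all. But as written it does not prove the theorem as stated; to repair it, replace the Rudin step by the exactness route above.
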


\begin{thm}\label{moderately meet continuous quasiexact posets are meet continuous and exact}
Every moderately meet continuous quasiexact poset is exact and meet continuous.
\end{thm}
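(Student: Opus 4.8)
The plan is to prove exactness first and then obtain meet continuity as a short consequence. The pivotal tool for exactness is Corollary \ref{Corollary-4.9}, which collapses the weak way-below relation of finite sets to that of their individual elements: for any finite $F$ and any $a\in P$, since $\dua_{w}F=\bigcup\{\dua_{w}y:y\in F\}$, we have $F\ll_{w}a$ if and only if $y\ll_{w}a$ for some $y\in F$. Applying this with $a=x$, every $F\in\fin_{w}(x)$ contains an element of $\dda_{w}x$; in particular $\dda_{w}x\neq\varnothing$, because $\fin_{w}(x)\neq\varnothing$ by quasiexactness (this nonemptiness was already observed in the proof of Lemma \ref{basis of wf topology}).

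Next I would show $\dda_{w}x$ is directed. Given $y_{1},y_{2}\in\dda_{w}x$, the singletons $\{y_{1}\},\{y_{2}\}$ lie in $\fin_{w}(x)$, so directedness of $\fin_{w}(x)$ yields $F_{3}\in\fin_{w}(x)$ with $F_{3}\subseteq\ua y_{1}\cap\ua y_{2}$; the collapsing observation then gives $y_{3}\in F_{3}$ with $y_{3}\ll_{w}x$, and $y_{3}\geq y_{1},y_{2}$, so $y_{3}\in\dda_{w}x$ is the required common upper bound. To compute the supremum I would establish $\bigcap\{\ua y:y\in\dda_{w}x\}=\ua x$. The inclusion $\supseteq$ holds since each $y\leq x$ by Lemma \ref{Lemma-Mushburn}(2). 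For $\subseteq$, if $z\notin\ua x$ then by quasiexactness $z\notin\ua F$ for some $F\in\fin_{w}(x)$, and the element $y\in F$ with $y\ll_{w}x$ satisfies $z\notin\ua y$. Consequently $x$ is an upper bound of $\dda_{w}x$, and any upper bound $u$ lies in $\bigcap\{\ua y:y\in\dda_{w}x\}=\ua x$, i.e.\ $u\geq x$; hence $x=\bigvee\dda_{w}x$. Note that this least-upper-bound argument needs no completeness of $P$, so exactness follows purely at the poset level.

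Finally, meet continuity is immediate once exactness is available. By Lemma \ref{wf is finer that Scott topology}, $\sigma(P)\subseteq\tau_{wf}(P)$, so the finer topology $\tau_{wf}$ has at least as many closed sets as $\sigma$, whence $\cl_{\tau_{wf}}(A)\subseteq\cl_{\sigma}(A)$ for every $A\subseteq P$. Thus, for a directed $D$ with $x\leq\bigvee D$, moderate meet continuity gives $x\in\cl_{\tau_{wf}}(\da D\cap\da x)\subseteq\cl_{\sigma}(\da D\cap\da x)$, which is precisely meet continuity.

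The main obstacle is the exactness half, and within it the use of Corollary \ref{Corollary-4.9} to extract from each finite set in $\fin_{w}(x)$ a single element weakly way below $x$: this single reduction drives both the directedness of $\dda_{w}x$ and the supremum computation. Once exactness is secured, the meet-continuity conclusion is a one-line consequence of the topology comparison $\sigma(P)\subseteq\tau_{wf}(P)$.
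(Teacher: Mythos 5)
Your proof is correct and follows essentially the same route as the paper's: both hinge on Corollary \ref{Corollary-4.9} to extract, from each finite $F\ll_{w}x$, a single element $y\in F$ with $y\ll_{w}x$, then verify directedness of $\dda_{w}x$ and $\bigcap\{\ua y:y\in\dda_{w}x\}=\ua x$, and finally deduce meet continuity from $\sigma(P)\subseteq\tau_{wf}(P)$ via Lemma \ref{wf is finer that Scott topology}. Your handling of the supremum (arguing directly that $x$ is the least upper bound rather than writing $\ua(\bigvee\dda_{w}x)=\bigcap\{\ua y:y\in\dda_{w}x\}$, which presupposes the supremum exists) is in fact a slightly cleaner rendering of the same step.
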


\begin{proof}
Let $P$ be a moderately meet continuous quasiexact. To show that $P$ is exact, we consider any $a\in P$.
\begin{description}
  \item[Claim 1.] $\dda_{w}a$ is directed.

  Let $y_{1}, y_{2}\in \dda_{w}a$. Then $\{y_{1}\}, \{y_{2}\}\in\fin_{bl(w)}(a)$. By the hypothesis, there exists $F\in\fin_{bl(w)}(a)$ such that $F\subseteq \ua y_{1}\cap \ua y_{2}$. By Corollary \ref{Corollary-4.9}, we have $a\in\bigcup\{\dua_{w}x: x\in F\}$. So there exists $x\in F$ such that $x\ll_{w}a$. Note that $F\subseteq \ua y_{1}\cap\ua y_{2}$, then $x\geq y_{1}$ and $x\geq y_{2}$.

  \item[Claim 2.] $\bigcap\{\ua y: y\in\dda_{w}a\}=\ua a$.

Obviously, $\bigcap\{\ua y: y\in\dda_{w}a\}\supseteq\ua a$. For any $F\in\fin_{bl(w)}(a)$, by Corollary \ref{Corollary-4.9}, we can pick $a_{F}\in F$ such that $a_{F}\ll_{w}a$. Then $\{\ua y_{F}: F\in\fin_{bl(w)}(x)\}\subseteq\{\ua y: y\in\dda_{w}x\}$. It follows that $\bigcap\{\ua y: y\in\dda_{w}x\}\subseteq\bigcap\{\ua y_{F}: F\in\fin_{bl(w)}(x)\}\subseteq\bigcap\{\ua F: F\in\fin^{w}_{bl}(x)\}=\ua x$. Hence, $\bigcap\{\ua y: y\in\dda_{w}a\}=\ua a$. Note that $\ua(\bigvee \dda_{w}a)=\bigcap\{\ua y: y\in\dda_{w}a\}$. So $\bigvee \dda_{w}a=a$.
\end{description}
By Proposition \ref{equivalent condition of exact}, we conclude that $P$ is exact.

For any $x\in P$ and directed set $D\subseteq P$, if $x\in\cl_{\tau_{wf}}(\da D\cap\da x)$, then by Lemma \ref{wf is finer that Scott topology} and the definition of closure, we have $\cl_{\tau_{wf}}(\da D\cap\da x)\subseteq\cl_{\sigma}(\da D\cap\da x)$. Therefore, $P$ is meet continuous.
\end{proof}

Shen et al. \cite{Shen-Wu-Zhao-2019} showed that every meet continuous weak domain is a domain. By Proposition \ref{moderately meet continuous quasiexact posets are meet continuous and exact}, we conclude the following result.

\begin{cor}
A poset $P$ is a domain if $P$ is a moderately meet continuous quasiexact dcpo with the relation $\ll_{w}$ weakly increasing.
\end{cor}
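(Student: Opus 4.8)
The plan is to chain together the preceding theorem with the two structural facts already recorded in the paper, so that no fresh argument is needed. First I would apply Theorem~\ref{moderately meet continuous quasiexact posets are meet continuous and exact}: since $P$ is by hypothesis a moderately meet continuous quasiexact poset, that theorem immediately yields that $P$ is both exact and meet continuous. This step is a direct invocation and requires no new computation.

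Next I would assemble the defining conditions of a weak domain. Recall that a weak domain is an exact dcpo whose weak way-below relation $\ll_{w}$ is weakly increasing. We already have that $P$ is a dcpo (given), that $P$ is exact (from the first step), and that $\ll_{w}$ is weakly increasing (by hypothesis). Hence $P$ satisfies the definition of a weak domain verbatim, and the only thing to check is that these three ingredients line up exactly with that definition.

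Finally I would invoke the result of Shen et al.~\cite{Shen-Wu-Zhao-2019} quoted just above the corollary, namely that every meet continuous weak domain is a domain. Since $P$ is meet continuous (from the first step) and a weak domain (from the second step), this result gives that $P$ is a domain, completing the argument.

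The main point to flag is that this corollary carries essentially no substantive obstacle: all the genuine work was done in establishing Theorem~\ref{moderately meet continuous quasiexact posets are meet continuous and exact}, where moderate meet continuity was used to force both exactness and ordinary meet continuity out of quasiexactness. The only care needed here is bookkeeping---verifying that \emph{dcpo}, \emph{exactness}, and \emph{weakly increasing} $\ll_{w}$ together reconstitute precisely the hypothesis ``meet continuous weak domain'' required by the externally cited theorem.
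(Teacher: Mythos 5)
Your proof is correct and follows exactly the paper's own route: invoke Theorem~\ref{moderately meet continuous quasiexact posets are meet continuous and exact} to get exactness and meet continuity, observe that exactness plus the dcpo hypothesis and the weakly increasing $\ll_{w}$ give a weak domain, and conclude via the cited result of Shen et al.\ that every meet continuous weak domain is a domain. Your bookkeeping remark is apt, since the paper itself treats this corollary as an immediate consequence of those two facts.
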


In this paper, we explored the quasiexact posets, parallel to the quasicontinuous posets. A new topology, the wf topology  on posets is introduced and used to define the moderately meet continuous posets. Although several results on such structures have been obtained, we still have basic problems to solve as illustrated below.

\noindent{\bf Problem 1.} What is the property $p$ such that a poset $P$ is exact if and only if it is quasiexact and has property $p$?

It is known that a poset $P$ is continuous if and only if it is quasicontinuous and meet continuous \cite{Mao-Xu-2006}. However, we still do not have a similar result for exact posets. It is only proved that every exact poset is quasiexact. 

\noindent{\bf Problem 2.} Under what conditions, a quasiexact dcpo is quasicontinuous?

At the moment we just know that every quasicontinuous dcpo is quasiexact. It would be ideal if we could find a property $q$ such that a dcpo is quasicontinuous if and only if it is quasiexact and has property $q$.

\section*{Acknowledgements}
We thank Professor Guohua Wu in Nanyang Technological University for his valuable suggestions.
\vskip 2mm

\bibliographystyle{./entics}

\begin{thebibliography}{10}\label{bibliography}
\bibitem{Birkhoff-1967b}
G. Birkhoff, ``Lattice Theory," Vol. 25. American Mathematical Society Colloquium Publications, 1967. ISBN:
9780821810255

\bibitem{Coecke-Martin-2002}
B. Coecke and K. Martin, \emph{A partial order on classical and quantum states}, in: New Structures in Physics, Lecture Notes in Physics \textbf{813}, Springer Verlag (2010), pp.~593-683. \newline
\url{https://doi.org/10.1007/978-3-642-12821-9_10} Available online at \newline\url{https://www.cs.ox.ac.uk/techreports/oucl/rr-02-07.html} 

\bibitem{Gierz-book-2003}
G. Gierz, K. Hofmann, K. Keimel, J. Lawson, M. Mislove and D. Scott, Continuous Lattices and Domains, Encyclopedia of Mathematics and its Applications, vol. 93, Cambridge University Press, 2003. ISBN: 9780521803380

\bibitem{Gierz-Lawson-1981}
G. Gierz and J. Lawson, Generalized continuous and hypercontinuous lattices, The Rocky Mountain Journal of Mathematics, 11 (1981), 271-296. \newline \url{https://www.jstor.org/stable/44236598}

\bibitem{Gierz-Lawson-Stralka-1983b}
G. Gierz, J. D. Lawson, and A. R. Stralka, Quasicontinuous posets, Houston Journal of Mathematics, 9 (2) (1983), 191-208. \newline  \url{https://www.math.uh.edu/~hjm/vol09-2.html}

\bibitem{Goubault-book-2013}
J. Goubault-Larrecq, Non-Hausdorff Topology and Domain Theory, Selected Topics in Point-set Topology, Vol. 22, Cambridge University Press, 2013. ISBN: 9781107034136

\bibitem{Hofmann-Stralka-1976}
K. H. Hofmann and A. R. Stralka, The algebraic theory of compact Lawson semilattices: applications of Galois connections to compact semilattices, Dissertationes Mathematicae, 137 (1976), 1-54. Avcailable online at: \newline \url{http://eudml.org/doc/268477} 

\bibitem{Isbell-1975b}
J. R. Isbell, Meet-continuous lattices, Symposia Mathematica, Vol. XVI (Convegno sui Gruppi Topologici e Gruppi di Lie, INDAM, Rome, 1974), pp. 41–54. Academic Press, London, 1975. 

\bibitem{Kou-Liu-Luo-2001}
H. Kou, Y. Liu and M. Luo, On meet continuous dcpo, Domain and Process \Rmnum{2}, Semantic Structures in Computation. Kluwer, 2001. DOI: \newline \url{https://doi.org/10.1007/978-94-017-1291-0_5}

\bibitem{Rudin-1981}
M. E. Rudin, Directed sets which converge, In L.F. McAuley and M.M. Rao, editors, General Topology and Modern Analysis, University of California, Riverside, 1980, pages 305–307. Academic Press, 1981. ISBN: 012481820X

\bibitem{Scott-1970}
D. S. Scott, Outline of a mathematical of theory of computation, Proceedings of the Fourth Annual Princeton Conference on Information Sciences and System, Princeton University Press, 1970, 169-176. Available online at: \newline  \url{https://www.cs.ox.ac.uk/publications/publication3720-abstract.html}

\bibitem{Scott-1974}
D. S. Scott, Continuous lattices, in: Toposes, algebraic geometry and logic, Springer Lecture Notes in Mathematics 274, Springer-Verlag, 1974, pp. 97-136. \newline \url{https://doi.org/10.1007/BFb0073967} Available online at: \newline \url{https://www.cs.ox.ac.uk/files/3229/PRG07.pdf}

\bibitem{Smyth-1978}
M. B. Smyth, Powerdomains, Coventry, UK: Department of Computer Science. (Theory of Computation Report). CS-RR-012, 1976. Available online at: \newline \url{http://wrap.warwick.ac.uk/46313/1/WRAP_Smyth_cs-rr-012.pdf}

\bibitem{Mushburn-2007}
J. Mashburn, A comparison of three topologies on ordered sets, Topology Proceedings, 31 (1) (2007), 197-217. Available online at: \newline \url{https://ecommons.udayton.edu/mth_fac_pub/18}

\bibitem{Mushburn-2010}
J. Mashburn, Linearly ordered topological spaces and weak domain representability, Topology Proceedings, 35 (2010), 149-164. Available online at: \newline \url{https://ecommons.udayton.edu/mth_fac_pub/20}

\bibitem{Shen-Wu-Zhao-2019}
C. Shen, G. Wu and D. Zhao, Weak domain models of $T_{1}$ spaces, Electronic Notes in Theoretical Computer Science, 345 (2019), 219-232. \newline \url{https://doi.org/10.1016/j.entcs.2019.07.025}

\bibitem{Mao-Xu-2006}
X. Mao and L. Xu, Quasicontinuity of posets via Scott topology and sobrification. Order, 23 (2006), 359-369. \newline  \url{https://doi.org/10.1007/s11083-007-9054-4}

\bibitem{Mao-Xu-2009}
X. Mao and L. Xu, Meet continuity properties of posets. Theoretical Computer Science, 410 (2009), 4234-4240. \newline \url{https://doi.org/10.1016/j.tcs.2009.06.017}
\end{thebibliography}

\end{document}